\documentclass[12pt,reqno]{amsart}

\usepackage{amssymb,amsmath,amsthm,mathtools,wasysym,calc,verbatim,enumitem,tikz,pgfplots,hyperref,url,mathrsfs,cite,fullpage,bbm,comment}
\usepackage{comment}
\mathtoolsset{showonlyrefs}
\usepackage{scalerel}
\usepackage{stackengine}
\stackMath

\newcommand\pig[1]{\scalerel*[5pt]{\normalsize#1}{%
  \ensurestackMath{\addstackgap[0.5pt]{\normalsize#1}}}}
  \newcommand\pigl[1]{\hspace{0.05cm}\mathopen{\pig{#1}}\hspace{0.03cm}}

\newtheorem{theorem}{Theorem}
\newtheorem{definition}[theorem]{Definition}

\newtheorem{lemma}[theorem]{Lemma}
\newtheorem{claim}[theorem]{Claim}

\newtheorem{observation}[theorem]{Observation}

\newtheorem*{claim*}{Claim}

\theoremstyle{remark}
\newtheorem*{remark*}{Remark}

\numberwithin{theorem}{section}

\renewcommand{\phi}{\varphi}

\newcommand{\gap}{\hspace{0.02cm}}

\newcommand{\p}{\hspace{0.02cm}+\hspace{0.02cm}}
\newcommand{\m}{\hspace{0.02cm}-\hspace{0.02cm}}

\newcommand{\ina}{\hspace{0.02cm}\in\hspace{0.02cm}}

\newcommand{\lea}{\hspace{0.03cm} \le \hspace{0.04cm}}
\newcommand{\gea}{\hspace{0.035cm} \ge \hspace{0.03cm}}
\newcommand{\eqa}{\hspace{0.02cm}=\hspace{0.015cm}}

\renewcommand{\leq}{\le}

\newcommand{\eps}{\varepsilon}

\newcommand{\cF}{\mathcal F}
\newcommand{\cE}{\mathcal E}

\newcommand{\cA}{\mathcal A}

\newcommand{\cH}{\mathcal H}

\newcommand{\cT}{\mathcal T}

\newcommand{\cJ}{\mathcal J}

\def\1{\mathbbm{1}}

\renewcommand{\le}{\leqslant}
\renewcommand{\ge}{\geqslant}

\newcommand{\Ex}{\mathbb E}
\renewcommand{\Pr}{\mathbb{P}}

\newcommand{\N}{\mathbb N}

\def\va{\mathbf{a}}
\def\vb{\mathbf{b}}

\begin{document}

\title{On the Erd\H{o}s--Rogers function} 

\author{Robert Morris \and Julian Sahasrabudhe \and Jacques Verstra\"ete}

\address{IMPA, Estrada Dona Castorina 110, Jardim Bot\^anico, Rio de Janeiro, 22460-320, Brasil}\email{rob@impa.br}

\address{Department of Pure Mathematics and Mathematical Statistics, Wilberforce Road, Cambridge, CB3 0WA, UK}
\email{jdrs2@cam.ac.uk}

\address{Department of Mathematics, University of California, San Diego, La Jolla, CA 92093}
\email{jacques@ucsd.edu}

\thanks{RM was partially supported by FAPERJ (Proc.~E-26/200.977/2021) and by CNPq (Procs~303681/2020-9 and~407970/2023-1); JS was partially supported by European Research Council (ERC) Starting Grant “High Dimensional Probability and Combinatorics”, grant No.~101165900; JV was partially supported by the National Science Foundation FRG Award DMS-1952786 and NSF Award DMS-2347832.}

\begin{abstract}
We show that the Erd\H{o}s--Rogers function $f_{s,s+1}(n)$ satisfies 
$$f_{s,s+1}(n) = \Theta\big( \sqrt{n \log n} \, \big)$$
for every $s \ge 2$. More precisely, we construct a $K_{s+1}$-free graph on $n$ vertices in which every set of at least $C(s) \displaystyle{ \sqrt{n \log n}}$ vertices contains a copy of $K_s$ for some constant $C(s)$, which implies the upper bound. 
The matching lower bound follows from a theorem of Joret, Micek, Reed and Smid on the clique chromatic number of a graph.
\end{abstract}

\maketitle

\vspace{-2em}

\section{Introduction}

The Ramsey number $R(\ell,k)$ is the smallest $n \in \N$ such that every red-blue colouring of the edges of the complete graph $K_n$ contains either a red copy of $K_\ell$ or a blue copy of $K_k$. The study of these numbers was introduced by Ramsey~\cite{R30} and Erd\H{o}s and Szekeres~\cite{ESz} in the 1930s, and has been a central topic in probabilistic combinatorics for well over half a century, since the seminal papers of Erd\H{o}s~\cite{E47,E59,E61}. In recent years, the area has seen a number of exciting advances (see, for example,~\cite{BBCGHMST,Br26,CGMS,CJMS25,HHKP25,MaV}); we refer the reader to the surveys~\cite{ICM26,Ver26} for a gentle introduction to these (and other) recent developments. 

When $\ell$ is fixed and $k \to \infty$, the problem of proving bounds on $R(\ell,k)$ is asymptotically equivalent to that of bounding the size of the largest independent set that can be found in every $K_\ell$-free graph with $n$~vertices. The following natural generalisation of this problem was introduced in 1962 by Erd\H{o}s and Rogers \cite{ER62}, who were motivated by a question of Hajnal: What is the largest $K_s$-free subset that can be found in every $K_t$-free graph on $n$ vertices? 

In this paper, we will consider the case $t = s + 1$ of this problem, which has received a significant amount of attention over the past 15 years. To state the known bounds for this problem, let us write $f_s(n)$ for the size of the largest $K_{s}$-free set of vertices that can be found in every $K_{s+1}$-free graph with $n$ vertices, and observe that
$$f_2(n) \ge k \qquad \; \Leftrightarrow \; \qquad n \ge R(3,k).$$
By the famous results of Ajtai, Koml\'os and Szemer\'{e}di~\cite{AKSz80} and Kim~\cite{Kim95}, which determined $R(3,k)$ up to a constant factor, it follows that
$$f_2(n) = \Theta\big( \sqrt{n \log n} \, \big).$$
In fact, much more is now known: the lower bound was improved by Shearer~\cite{S83} in 1983, and the upper bound was improved in a series of papers~\cite{Boh,BK,FGM,CJMS25}, culminating in the work of Hefty, Horn, King and Pfender~\cite{HHKP25}, who showed that
\begin{equation}\label{eq:R3k:bounds}
\bigg( \frac{1}{\sqrt{2}} + o(1) \bigg) \sqrt{n \log n} \le f_2(n) \le \big( 1 + o(1) \big) \sqrt{n \log n}
\end{equation}
as $n \to \infty$. It seems likely that any further improvement of the constant factor in either of these bounds would require a major breakthrough. 

When $s \ge 3$, the function $f_s(n)$ is no longer equivalent to the Ramsey numbers, and progress on the problem has been somewhat slower. The lower bound 
\begin{equation}\label{eq:JMRS:lower}
f_s(n) \ge c \sqrt{n \log n}
\end{equation}
for some absolute constant $c  > 0$ follows easily from a theorem of Joret, Micek, Reed and Smid~\cite{JMRS21} on the clique chromatic number of a graph\footnote{We are grateful to Huy Pham for pointing out this theorem to us.} (see below), the proof of which was based on an earlier approach of Molloy~\cite{Mol19}. For comparison, the bound $f_s(n) \ge \sqrt{n}$ (first observed in~\cite{BH91}) is straightforward,\footnote{To see this, simply note that the neighbourhood of any vertex in a $K_{s+1}$-free graph $G$ is $K_s$-free, and recall that $\alpha(G) \ge n / (\Delta(G) + 1)$. Dudek and Mubayi~\cite{DM14} were the first to observe that if we replace this by Shearer's bound $\alpha(G) \ge c(s) (n \log d) / (d \log \log d)$, which holds for $K_{s+1}$-free graphs with $n$ vertices and maximum degree $d$, then we obtain a bound of the form $f_s(n) = \Omega_s\big( \displaystyle{\sqrt{ n \log n / \log\log n}} \big)$.} and a well-known theorem of Shearer~\cite{S95}, which bounds the independence number of a $K_{s+1}$-free graph $G$ with maximum degree $d$, gives a bound that is within a factor of order $(\log\log n)^{1/2}$ of~\eqref{eq:JMRS:lower} for fixed $s$.

The first non-trivial upper bound on $f_s(n)$ in the case $s \ge 3$ was obtained by Erd\H{o}s and Rogers~\cite{ER62}, using a random geometric graph, and was not improved until almost 30 years later, when Bollobás and Hind~\cite{BH91} introduced the important idea of constructing a graph from a random collection of copies of $K_s$, and showed that $f_3(n) \le n^{7/10 \p o(1)}$. This was later strengthened to $f_3(n) = O\big( n^{3/5} (\log n)^{1/2} \big)$ by Krivelevich~\cite{Kr94,Kr95}, but for large $s$ the methods of the papers~\cite{ER62,BH91,Kr94,Kr95} only imply bounds of the form $f_s(n) \le n^{1 \m \eps(s)}$ for some $\eps(s)$ tending to zero as $s \to \infty$, and for many years it was unclear whether or not there should exist a constant $\eps > 0$ such that $f_s(n) \le n^{1 \m \eps}$ for all $s \in \N$ (see~\cite{Kr95,Su05b}). 


A key breakthrough was made in 2011 by Dudek and R\"odl~\cite{DR11}, who resolved this question by proving an upper bound of the form
$$f_s(n) = O(n^{2/3})$$
for every $s \ge 3$. To do so, they constructed a graph by placing random blow-ups of $K_s$ in the sets of an algebraic object called the `generalised quadrangle'. Their method was then developed further by Wolfovitz~\cite{W13} and Dudek, Retter and R\"odl~\cite{DRR14}, who proved that 
\begin{equation}\label{eq:W:DRR}
f_{s}(n) = \sqrt{n} \pigl( \log n \pig)^{O(s^2)}
\end{equation}
for every $s \ge 3$. To do so, they replaced the generalised quadrangle used in~\cite{DR11} by a random subset of the lines of a projective\footnote{More precisely, Wolfovitz used a projective plane to prove~\eqref{eq:W:DRR} in the case $s = 3$, while Dudek, Retter and R\"odl used an affine plane (and a more involved edge-deletion step) to extend his result to $s \ge 4$.} plane, and also added a final (random) edge-removal step. More recently, Mubayi and Verstra\"ete~\cite{MV25} improved the bound further, showing that  
$$f_{s}(n) \le C(s) \sqrt{n} \log n$$
for every $s \ge 3$ and some constant $C(s) > 0$. To do so, they replaced the hypergraph used in~\cite{DRR14} by a random sub-hypergraph of the one used in the recent breakthrough result of Mattheus and Verstra\"ete~\cite{MaV} on the Ramsey numbers $R(4,k)$, which was constructed using an algebraic object called the Hermitian unital. They also tentatively conjectured (see~\cite[Conjecture~1]{MV25}) that moreover $f_{s}(n) = \sqrt{n} \pigl( \log n \pig)^{1-o(1)}$ for all $s \ge 3$. 

In this paper we disprove this conjecture by showing that the correct exponent of $\log n$ is in fact $1/2$, and moreover determine $f_s(n)$ up to a constant factor. 

\begin{theorem}\label{thm:ER} 
For every $s \ge 2$, we have 
\begin{equation}\label{eq:mainthm}
f_s(n) = \Theta\big( \sqrt{n \log n} \, \big) 
\end{equation}
as $n \to \infty$. 
\end{theorem}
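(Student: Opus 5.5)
The lower bound $f_s(n) \ge c\sqrt{n\log n}$ I would derive from the theorem of Joret, Micek, Reed and Smid on the clique chromatic number. That result says an $n$-vertex graph has a vertex colouring with $O(\sqrt{n/\log n})$ colours in which no inclusion-maximal clique (with at least two vertices) is monochromatic. Take a $K_{s+1}$-free graph $G$ on $n$ vertices and apply this to $G$ with an extra pruning step: let $G'$ be the graph whose edges are those of $G$ lying in some copy of $K_s$. Colour $G'$ so that no maximal clique of $G'$ is monochromatic. Any copy of $K_s$ in $G$ is a clique of $G'$ and, since $G$ is $K_{s+1}$-free, it is maximal. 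Hence no colour class contains a $K_s$, and the largest colour class is a $K_s$-free set of size $\Omega(\sqrt{n\log n})$. The case $s=2$ is the classical result on $R(3,k)$ from~\eqref{eq:R3k:bounds}.

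The upper bound is the real content. I would construct a $K_{s+1}$-free graph on $n$ vertices in which every set of $C(s)\sqrt{n\log n}$ vertices contains a $K_s$. I would follow the Dudek--Rödl / Mubayi--Verstraëte template: start from a linear $s$-uniform-like hypergraph $H$ on $n$ vertices with $\Theta(\sqrt n)$ degree and well-controlled short cycles, obtained for example by taking random subsets of lines of an affine plane, or of the Hermitian unital. Then place a copy of $K_s$ (a random blow-up) on each chosen edge.

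The $\log n$ loss in~\cite{MV25} comes from a union bound over all sets $W$ of size $k$ together with a final random deletion step. To shave it to $\sqrt{\log n}$, I would replace the static random construction by a \emph{random greedy process}: add random $s$-cliques drawn from $H$ one at a time, subject to not creating a $K_{s+1}$, mimicking the triangle-free process of Bohman and Kim that achieves $R(3,k)$. Equivalently, I would run a nibble in which, at each step, a clique is rejected if it would close a $K_{s+1}$ with previously chosen ones.

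The key steps, in order, are the following.
\begin{enumerate}
\item Choose the host design so that any $K_{s+1}$ in the union of cliques must use at least two cliques meeting in a configuration whose count is controlled by the linearity and girth-type properties of the design.
\item Track, via the differential equations method with martingale concentration, the number of ``open'' cliques (still addable) and the codegree-type quantities, showing that they follow deterministic trajectories until the process has placed $\Theta(\sqrt{n\log n})$-density structure.
\item Prove the key independence statement: for every $W$ with $|W| = k = C\sqrt{n\log n}$, the probability that no chosen clique lies inside $W$ is at most $e^{-\omega(k\log n)}$, or at least less than $\binom nk^{-1}$. This holds because $W$ contains about $k^s\cdot(\text{density})$ open cliques, and each one is selected with a certain probability at each step.
\end{enumerate}

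I expect step 3 to be the main obstacle. The union bound needs the exponent to beat $k\log(n/k)\approx k\log n$. The naive count of open cliques in $W$ gives exactly the critical order only when the density is tuned to $\log n$ rather than $\log^2 n$, so one must show that open cliques inside $W$ are not destroyed disproportionately. My first attempt would be the Bohman--Kim style argument: show that the number of open $s$-sets inside every large $W$ stays close to its expected proportion, using an edge-exposure martingale restricted to $W$ with a Freedman-type inequality. This would be supported by the algebraic structure ensuring bounded codegrees, which keeps the step-by-step variance small enough.
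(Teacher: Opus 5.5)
Your lower bound is correct and is the paper's argument. The passage to $G'$ is harmless but unnecessary: in a $K_{s+1}$-free graph every copy of $K_s$ is already a maximal clique of $G$.

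The upper bound, however, is not proved. Steps 1--3 sketch a research programme rather than an argument: a $K_{s+1}$-avoiding random greedy process on an algebraic design, analysed by the differential equations method, whose basic move is not even pinned down. Step 3 carries the entire content of the theorem, and it rests only on a heuristic count of open cliques in $W$. That heuristic is exactly where single-scale constructions break down. For a blow-up of $K_s$ on an $\ell$-set to meet a typical $k$-set in all $s$ parts, you need $k\ell\gtrsim n$. The union bound over $\binom{n}{k}$ sets needs $m\gtrsim k\log n$ blow-ups. So each vertex lies in $\Delta\approx m\ell/n\gtrsim\log n$ blow-ups, and when $k=\Theta(\sqrt{n\log n})$ it has of order $(\Delta\ell)^2\gtrsim n\log n\gg n$ vertices within distance two. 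Hence almost every pair of vertices is joined by a path of length at most two. These are exactly the pairs at which a new clique can complete a $K_{s+1}$, or at which an edge must be deleted. Drawing the sets from a design rather than at random does not change this count. No Freedman-type concentration for a greedy process explains why enough cliques survive inside every $k$-set simultaneously. Indeed, the known analyses of $K_t$-free processes for $t\ge 4$ do not even give the right order for $R(4,k)$.

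What you are missing is a mechanism that makes such `closed' pairs sparse inside every $k$-set, plus a way to turn that sparsity into a probability bound. The paper does this in three steps.
\begin{itemize}
\item It takes $m=16k\log n$ random copies of $T_s(\ell/r)$ on only $n/r$ vertices and blows each vertex up into $r=n^{1/8}$ vertices. The square of the underlying graph then has density $O(s^2\log n/r)=o(1)$, and every $k$-set contains at most $\beta\binom{k}{2}$ closed pairs (Lemma~\ref{lem:main-open-edges}).
\item The blow-up creates sets of tiny projection (unions of parts), so the paper overlays two independent copies on independent random partitions, following Hefty, Horn, King and Pfender. This guarantees every $4k$-set has projection at least $k$ onto one of them.
\item The deletion rules are: remove edges lying in two blow-ups, then one edge of every triangle not inside a single blow-up, of the minority colour. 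These give $K_{s+1}$-freeness. They also ensure that every destroyed $K_s$ contains an edge that, with probability at least $1/3$, was already closed by blow-ups revealed earlier in a uniformly random order (with $B$ revealed first). This converts ``few closed pairs'' into $\Pr(G[S]\text{ is }K_s\text{-free})\le e^{-m/16}$, which beats $\binom{n}{k}$.
\end{itemize}
Without some replacement for these ingredients, your step 3 has no support.
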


An interesting aspect of our construction is that, unlike those in~\cite{DR11,W13,DRR14,MV25}, it makes no use of algebraic objects such as the generalised quadrangle, the projective plane, or the Hermitian unital.
Indeed, the first step towards our construction was the observation that simply throwing down random blow-ups of $K_s$, 
and deleting edges in an appropriate way (as in the earlier construction of Gowers and Janzer~\cite{GJ20}, see Section~\ref{sec:cliques}), does just as well as (and, in fact, a little better than) the earlier algebraic approaches. However, it does not seem possible to reduce the power of $\log n$ below $3/4$ with this approach alone. 

The second key idea that we use is to combine two random blow-ups of this graph, as in the recent breakthrough of Hefty, Horn, King and Pfender~\cite{HHKP25} on the Ramsey numbers $R(3,k)$ (see also the more classical work of Alon and R\"odl~\cite{AR} for multicolour Ramsey numbers). At first glance it does not seem possible to use this idea here, as the method is limited to situations where we are working with triangle-free graphs (or rather with odd-cycle-free graphs, as in~\cite{CJMPS25}). However, we are \emph{almost} building triangle-free graphs, in the sense that the only triangles in our constructions are contained in the random blowups of $K_s$. 

The constant we obtain in the upper bound of Theorem~\ref{thm:ER} is a polynomial function of~$s$ (see Section~\ref{sec:triangle-free} for a discussion of the limits of our method in this direction). On the other hand, the constant in the lower bound does not depend on $s$, since by~\cite[Corollary~2]{JMRS21}, every graph with $n$ vertices has a vertex-colouring with $O\big( \sqrt{n / \log n} \, \big)$ colours such that no maximal clique is monochromatic, and if $G$ is $K_{s+1}$-free, then the largest colour class in such a colouring is a $K_s$-free subset of $V(G)$ of size $\Omega\big( \displaystyle{\sqrt{n \log n}} \, \big)$. It would be extremely interesting to determine the behaviour of $f_s(n)$ as $s \to \infty$.

\subsection{The Erd\H{o}s--Rogers function for pairs of cliques}\label{sec:cliques}

Let us write $f_{s,t}(n)$ for the size of the largest $K_s$-free subset that can be found in every $K_t$-free graph on $n$ vertices, and note in particular that 
$$f_{s,s+1}(n) = f_s(n) \quad \qquad \text{and} \qquad \quad f_{2,t}(n) \ge k \quad  \Leftrightarrow \quad n \ge R(t,k).$$
Erd\H{o}s and Rogers~\cite{ER62} also introduced this more general problem, but the first bounds in the case $t \ge s + 2 \ge 5$ were obtained by Bollobás and Hind~\cite{BH91} and by Krivelevich~\cite{Kr94,Kr95}, who showed that
\begin{equation}\label{eq:fst:general:bounds}
n^{1/(t-s+1) \p o(1)} \le f_{s,t}(n) \le n^{s/(t+1) \p o(1)}
\end{equation}
as $n \to \infty$. The lower bound was improved by Sudakov~\cite{Su05a,Su05b} using dependent random choice (see the survey~\cite{DRC} for an introduction to this beautiful technique), who showed (amongst other results) that
$$f_{3,5}(n) \ge n^{5/12} \qquad \text{and} \qquad f_{s,s+2}(n) \ge n^{1/2 \m c(s)}$$
for some function $c(s) = \Theta(1/s)$, for every $s \ge 3$ and all sufficiently large $n \in \N$. Despite the significant amount of interest in this problem over the past 20 years, there has been no subsequent progress on the lower bounds when $t \ge s + 2$.

An upper bound of the form $f_{s,s+2}(n) = O(n^{1/2})$ was obtained by Dudek, Retter and R\"odl~\cite{DRR14} for all $s \ge 4$, and this was improved by Gowers and Janzer~\cite{GJ20} to 
$$f_{3,5}(n) \le n^{6/13 \p o(1)} \qquad \text{and} \qquad f_{s,s+2}(n) \le n^{1/2 \m c'(s)}$$
for some function $c'(s) = \Theta(1/s)$, for every $s \ge 3$. The first step of our construction is very similar to theirs, which also uses a random collection of blow-ups of $K_s$. More recently, Janzer and Sudakov~\cite{JS22} adapted the approach of Mattheus and Verstra\"ete~\cite{MaV}, who used the Hermitian unital to determine $f_{2,4}(n)$ up to a poly-logarithmic factor of $n$, to show that
$$f_{s,s+2}(n) = O\big( n^{(2s-3)/(4s-5)} \pigl( \log n \pig)^3 \big)$$
for every $s \ge 3$, which in particular implies that $f_{3,5}(n) \le n^{3/7 \p o(1)}$. Even more recently, Brada\v{c}~\cite{Br26} dramatically improved the best known lower bound on $R(s,k)$ for all $s \ge 5$, showing that $R(s,k) = k^{s - 1 + o(1)}$. As noted in~\cite[Section~4]{Br26}, this implies the bound 
$$f_{s,t}(n) \le n^{(s-1)/(t-1) \p o(1)},$$
which is stronger than~\eqref{eq:fst:general:bounds} when $t \ge 2s$.  However, despite all of this very significant progress, the best-known upper and lower bounds on $f_{s,t}(n)$ still differ by a polynomial factor of $n$ for every pair $(s,t)$ with $t \ge s + 2 \ge 5$. 


\pagebreak

\subsection{The Erd\H{o}s--Rogers function for pairs of graphs}

The following generalisation of the Erd\H{o}s--Rogers function was introduced recently by Mubayi and Verstra\"{e}te~\cite{MV24b}, and subsequently studied in~\cite{BCL25} and~\cite{GJS25}. 
For graphs $F$ and $H$, define $f_{H,F}(n)$ to be the largest $H$-free set of vertices that can be found in every $F$-free graph with $n$ vertices. Note that this function is only interesting when $H$ is $F$-free, and that for every $s \ge 2$, we have
\begin{equation}\label{eq:HKs:lower}
f_{K_s,K_{s+1}}(n) = f_{s,s+1}(n) \quad \qquad \text{and} \qquad \quad f_{H,K_{s+1}}(n) \ge f_s(n)
\end{equation}
for any graph $H$ that contains a copy of $K_s$. In particular, it follows from~\eqref{eq:R3k:bounds} that
\begin{equation}\label{eq:HK3:lower}
f_{H,K_3}(n) \ge f_2(n) = \Theta\big( \sqrt{n \log n} \big) 
\end{equation}
for every graph $H$ with at least one edge. Mubayi and Verstra\"{e}te~\cite{MV24b} proved that
\begin{equation}\label{eq:HK3:upper}
f_{H,K_3}(n) \le \sqrt{n} \hspace{0.03cm} \exp\big( (\log n)^{1/2+o(1)} \big)
\end{equation}
for every triangle-free graph $H$, by placing a random blow-up of $H$ in each edge of a certain linear triangle-free hypergraph, which was introduced by Erd\H{o}s, Frankl and R\"odl~\cite{EFR86} using the classical construction of Behrend~\cite{Be46} of a set without 3-term arithmetic progressions. 

The gap between~\eqref{eq:HK3:lower} and~\eqref{eq:HK3:upper} motivated Mubayi and Verstra\"{e}te~\cite[Problem~1]{MV24b} to ask if there exists a triangle-free $H$ for which $f_{H,K_3}(n) \gg \displaystyle{\sqrt{ n \log n}}$. Our method can easily be adapted to show that there are no such graphs $H$, in the following more general sense.

\begin{theorem}\label{thm:triangle-free}
Let $s \ge 3$. For every $K_s$-free graph $H$, we have
$$f_{H,K_s}(n) = O\big( \sqrt{n\log n} \, \big).$$
\end{theorem}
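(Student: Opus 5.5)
Given a $K_s$-free graph $H$ on $h$ vertices, the plan is to build a $K_s$-free graph $G$ on $n$ vertices in which every set of at least $C(H)\sqrt{n\log n}$ vertices contains a copy of $H$. This is the same scheme that proves the upper bound in Theorem~\ref{thm:ER}, with the random blow-ups of $K_s$ replaced by random blow-ups of $H$.

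\textbf{Step 1: the base hypergraph.} Take a random $h$-uniform hypergraph $\mathcal{H}$ on the vertex set $[n]$ with edge density $p$, chosen so that a typical pair of vertices lies in about $\sqrt{n}\,\mathrm{polylog}(n)$ hyperedges' worth of structure. Delete a small number of hyperedges to remove the bad configurations: pairs covered by more than one hyperedge, and short Berge cycles, in the style of the Gowers--Janzer deletion. Inside each surviving hyperedge $e$, place a copy of $H$ via a uniformly random bijection $V(H) \to e$. Call the result $G_0$.

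\textbf{Step 2: controlling cliques.} Every clique of $G_0$ on at least three vertices must lie inside a single hyperedge, unless it comes from a cycle of hyperedges; those cycles are destroyed in the deletion step. So each clique of $G_0$ sits inside a copy of $H$. Since $H$ is $K_s$-free, $G_0$ is $K_s$-free. As in the paper, $G_0$ is "almost triangle-free": its only triangles lie inside the blow-ups.

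\textbf{Step 3: two random blow-ups.} To gain the last $\sqrt{\log n}$ factor, combine two independent random blow-ups/copies of $G_0$, following Hefty--Horn--King--Pfender. The key check is that the combination creates no new triangles, and hence no $K_s$, outside the blocks. This should hold because triangles only occur within individual copies of $H$, exactly as in the $K_s$ case.

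\textbf{Step 4: the counting argument.} Show that with positive probability every set $S$ of size $k = C\sqrt{n\log n}$ contains a copy of $H$. Concretely:
\begin{itemize}[leftmargin=*]
\item Count the hyperedges $e$ with $|e\cap S|$ large. For each such $e$, the probability that the random embedding of $H$ puts a full copy of $H$ inside $S$ is bounded below; in fact it suffices that $|e \cap S| = h$ with a constant probability of a good assignment.
\item These events are independent across hyperedges, since the embeddings are independent.
\item So the probability that $S$ is $H$-free is at most $\exp(-c\,\#\{e : e\subseteq S\})$.
\item Take a union bound over the $\binom{n}{k}$ choices of $S$. This needs $\#\{e\subseteq S\}\gg k\log n$ for every $S$, established through a pseudorandomness or containers-type argument on the hypergraph.
\end{itemize}

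The main obstacle is this last bound, i.e.\ proving that every $k$-set spans enough hyperedges after deletion. This is where the choice of parameters and the two-blow-up trick must give exactly $\sqrt{n\log n}$ rather than the $(\log n)^{3/4}$ loss. I would reuse the paper's corresponding lemma for $K_s$ verbatim: it only uses that the blocks have bounded size and that the embeddings are independent, and never uses any property of $K_s$ beyond $K_{s+1}$-freeness of the blocks, which is replaced here by $K_s$-freeness of $H$.
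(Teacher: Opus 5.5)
Your top-level idea, rerunning the $K_s$ argument with blow-ups of $H$ in place of blow-ups of $K_s$, is exactly what the paper does. However, the construction and steps you actually describe do not implement it, and two of them fail.

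The first problem is the size of the blocks. You place a single copy of $H$ on each hyperedge of an $h$-uniform hypergraph, where $h = v(H)$, so every block has bounded size. A set $S$ can then only contain the copy of $H$ from a block $e$ if $e \subseteq S$, so the random bijection plays no role. Step~4 explicitly requires $\#\{e : e \subseteq S\} \gg k\log n$ for \emph{every} $k$-set $S$, and this is impossible. Once multiply covered pairs are deleted the hypergraph is linear, so it has at most $n^2$ hyperedges. The average of $\#\{e : e \subseteq S\}$ over all $k$-sets is then at most $n^2(k/n)^h$. For $h \ge 4$ and $k = C\sqrt{n\log n}$ this is $O\big((\log n)^{h/2}\big)$, whatever pseudorandomness or container argument you use.

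The paper instead runs the construction of Section~\ref{sec:construction} verbatim with $J$ a balanced blow-up of $H$ on $\ell/r$ vertices. After the $r$-blow-up each block has $\ell = \Theta(\sqrt{n/\log n})$ vertices, and these large blocks are the whole point. By Lemma~\ref{lem:S-hit-I}, which uses $k\ell/n \ge 8h^2$, a set whose projection onto $V(A)$ has size at least $k$ meets every part of at least $m/2$ blocks, with probability at least $1 - e^{-m}$. This gives $\Omega(m)$ candidate transversal copies of $H$. Lemma~\ref{lem:ind-set-calc} then shows that, on the event $\cE_\beta(S)$, all of them are destroyed with probability at most $e^{-m/16}$. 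This beats the union bound over $k$-sets because $m = 16k\log n$. So your assertion that the paper's lemma only uses that the blocks have bounded size is backwards.

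The second problem is Step~3, which is false as stated. The union of two independent random blow-ups certainly creates triangles not contained in any single block, for instance with two edges of one colour and one of the other, and these can combine into copies of $K_s$. The paper destroys every such triangle in the second deletion step, removing the edge of the minority colour. Most of its work, namely Lemmas~\ref{lem:ind-set-calc} and~\ref{lem:main-open-edges} via the notion of closed edges, goes into showing that the two deletion steps cannot wipe out all of the transversal copies in a given set.

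With that machinery in place, the case of general $H$ takes a few lines. $K_s$-freeness follows as in Lemma~\ref{lem:G-Ks+1-free}: a blow-up of $H$ is $K_s$-free, and any copy of $K_s$ not inside a single block contains a triangle not inside a single block. The proof of Theorem~\ref{thm:ER:again} with $s$ replaced by $v(H)$ then gives, in every set of $C_H\sqrt{n\log n}$ vertices, a transversal of the parts of some $J^*$ none of whose edges were deleted. That transversal induces a copy of $H$.
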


More precisely, for each $K_s$-free graph $H$ we will show the following (see Theorem~\ref{thm:triangle-free:quant}): for some constant 
$C_H > 0$ and all sufficiently large $n \in \N$, there exists a $K_s$-free graph with $n$~vertices in which every set of at least $C_H \displaystyle{\sqrt{ n \log n}}$ vertices contains a copy of $H$. This will turn out to follow almost immediately from the proof of Theorem~\ref{thm:ER}.

Combining Theorem~\ref{thm:triangle-free} with~\eqref{eq:JMRS:lower} and~\eqref{eq:HKs:lower}, it follows that
$$f_{H,K_{s}}(n) = \Theta\big( \sqrt{n\log n} \big)$$
for every $K_{s}$-free graph $H$ that contains a copy of $K_{s-1}$.  For $K_{s-1}$-free graphs $H$, on the other hand, Gishboliner, Janzer and Sudakov~\cite{GJS25} proved that
$$f_{H,K_{s}}(n) \le n^{1/2 \m c(H)}$$
for some $c(H) > 0$, answering another question of Mubayi and Verstra\"{e}te~\cite[Problem~2]{MV24b}. They moreover showed that
$$f_{H,K_4}(n) \ge n^{1/2 \m c(t)}$$
for some function $c(t) = \Theta(1/\sqrt{t})$ and every graph $H$ with minimum degree $t$, and that 
\begin{equation}\label{eq:GJS:bipartite}
f_{H,K_s}(n) = n^{\Theta(1/ \log s)}
\end{equation}
for all bipartite graphs $H$ with sufficiently large minimum degree. Note that~\eqref{eq:GJS:bipartite} stands in stark contrast to the case $H = K_2$, since Brada\v{c}~\cite{Br26} showed that $f_{2,s}(n) = n^{1/(s-1) \p o(1)}$.

The rest of the paper is organised as follows. First, in Section~\ref{sec:construction} we will define the random graph $G$ that we use to prove our main theorems. Next, in Section~\ref{sec:three:lemmas}, we will state two lemmas about $G$, and show that they together imply Theorem~\ref{thm:ER}. In Sections~\ref{sec:prob:bound} and~\ref{sec:open:edges} we will prove these two lemmas, and hence complete the proof of Theorem~\ref{thm:ER}. Finally, in Section~\ref{sec:triangle-free}, we will show how to adapt the construction in order to prove Theorem~\ref{thm:triangle-free}.

\section{The construction}\label{sec:construction}

In this section we will describe the probabilistic construction that we will use to prove Theorems~\ref{thm:ER} and~\ref{thm:triangle-free}. As discussed in the introduction, it will be formed by taking the (random) union of two graphs, each of which is the blow-up of a randomly-chosen collection of complete $s$-partite graphs, 
and then deleting a carefully-chosen collection of edges. It will be fairly easy to see that the graph that we construct is $K_{s+1}$-free, but significantly harder to show that every set of $C(s) \displaystyle{\sqrt{n \log n}}$ vertices contains a copy of $K_s$.

\subsection{The construction} 

Define a random graph $R(n,m,J)$ on vertex set $[n]$ by taking the union of $m$ randomly chosen copies of a graph $J$. 

\begin{definition}
The random graph $R = R(n,m,J)$ has $n$ vertices, and edge set 
$$E(R) = E(J_1) \cup \cdots \cup E(J_m),$$
where each $J_i$ is chosen independently and uniformly at random from all copies of $J$ in $K_n$. 
\end{definition}

The first step in our construction is to let $J$ be the Turán graph $T_s(\ell/r)$ (i.e., a balanced blow-up of $K_s$ with $\ell/r$ vertices) and $A$ and $B$ be independent copies of $R(n/r,m,J)$, where 
\begin{equation}\label{def:ell:m:r}
\ell = \frac{1}{2^{37} s} \sqrt{ \frac{n}{\log n} }, \qquad m = 2^{44} s^3 \sqrt{n} \gap \pigl( \log n \pig)^{3/2} \qquad \text{ and } \qquad r = n^{1/8}.
\end{equation}
Next, let $A^*$ and $B^*$ be $r$-blow-ups of $A$ and $B$, meaning that each vertex is replaced by an independent set of size $r$, and each edge is replaced by the complete bipartite graph $K_{r,r}$. We now define $G^*$ to be the random graph with vertex set $[n]$ and edge set 
$$E(G^*) = E(A^*) \cup E(B^*),$$
where the vertices of $A^*$ and $B^*$ are each mapped randomly (and independently) onto $[n]$. It will be convenient to refer to the edges of $A^*$ and $B^*$ as having different `colours'. Finally, we remove edges from $G^*$ in two steps, in order to destroy all of the copies of $K_{s+1}$:\smallskip

\noindent \emph{First deletion step}. We first remove all edges of $G^*$ that are contained in the vertex sets of two or more blow-ups of $J$ (which can be of either the same or different colours).\smallskip

\noindent \emph{Second deletion step.} For each triangle $T \subset G^*$ that is not contained in any blow-up of $J$, and which survives the first deletion step, we delete one of the edges of $T$. If the triangle contains edges from both $A^*$ and $B^*$, then the deleted edge should be of the minority\footnote{Note that all edges that are in both $A^*$ and $B^*$ are removed in the first deletion step, so if $T$ contains edges from both $A^*$ and $B^*$, then it must have one edge in $A^*$ and two in $B^*$, or vice-versa.} colour.\smallskip 

\pagebreak

We will show, for each $s \ge 2$, that with high probability the graph $G$ that results from these deletions is $K_{s+1}$-free, and moreover every $k$-subset of the vertices of size 
$$k \ge C(s)\sqrt{n\log n}$$
contains a copy of $K_s$. Before doing so, however, let us give a slightly more formal definition of the graph $G$, and introduce some notation that will be useful in the proof below. 

Recall that $A$ and $B$ are independent copies of $R(n/r,m,J)$, and let
\begin{equation}\label{def:JA:JB}
\cJ_A = \big\{ J_1^A,\ldots, J_m^A \big\} \qquad \text{ and } \qquad \cJ_B = \big\{ J_1^B,\ldots, J_m^B \big\}
\end{equation}
be the copies of $J = T_s(\ell/r)$ that form $A$ and $B$. Recall that each $J \in \cJ_A$ is an independent uniformly random copy of $T_s(\ell/r)$ with vertices in $V(A)$, and similarly for $J \in \cJ_B$, except with vertices in $V(B)$. Let the vertex sets of $A$ and $B$ be
$$V(A) = \{ u_1,\ldots,u_{n/r} \} \qquad \text{and} \qquad V(B) = \{ w_1,\ldots,w_{n/r} \}$$
and 
let $U_i$ and $W_i$ be the $r$-sets in $V(G)$ corresponding to the vertices $u_i$ and $w_i$, respectively. In particular, observe that
\begin{equation}\label{def:partitions}
V(G) = U_1 \cup \cdots \cup U_{n/r} = W_1 \cup \cdots \cup W_{n/r}
\end{equation}
are independent random partitions of $V(G)$ into sets of size $r$, and 
$$\cJ^*_A = \big\{ J^* : J \in \cJ_A \big\} \qquad \text{ and } \qquad \cJ^*_B = \big\{ J^* : J \in \cJ_B \big\}$$
are the sets of blow-ups of $J$ that form $A^*$ and $B^*$, where $J^*$ is obtained from $J$ by replacing each vertex $u_i$ by the set $U_i$ (if $J \in \cJ_A$), and $w_i$ by the set $W_i$ (if $J \in \cJ_B$). Thus
\begin{equation}\label{def:Gstar}
E(G^*) = \bigcup_{J^* \in \hspace{0.03cm} \cJ^*} E(J^*), \qquad \text{where} \qquad \cJ^* = \cJ^*_A \cup \cJ^*_B.
\end{equation}
The deletion steps are now as follows:\smallskip

\noindent \emph{First deletion step}. Define
\begin{equation}\label{def:D1}
D_1 = \big\{ \hspace{0.02cm} xy \in E(G^*) : \{x,y\} \subset V(J_1^*) \cap V(J_2^*) \text{ for some } J_1^*,J_2^* \in \cJ^* \text{ with } J_1^* \ne J_2^* \hspace{0.02cm} \big\},
\end{equation}
and let $G'$ be the graph with vertex set $V(G^*)$ and edge set $E(G^*) \setminus D_1$.\smallskip 

\noindent \emph{Second deletion step.} Let $\cT$ be the collection of triangles $T \subset G'$ such that $V(T) \not\subset V(J^*)$ for every $J^* \in \cJ^*$. For each $T \in \cT$, choose an edge of $T$, following the rule that if $T$ has exactly one edge of a given colour ($A$ or $B$), then that edge is always chosen. 

Let $D_2$ be the collection of chosen edges, and let $G$ be the graph with vertex set $V(G')$ and edge set $E(G') \setminus D_2$.\smallskip 

Before continuing, let us make a simple but important observation.

\pagebreak

\begin{observation}\label{obs:unique:Jstar}
For any edge $xy \in E(G)$, there is a unique $J^* \in \cJ^*$ with $x,y \in V(J^*)$.
\end{observation}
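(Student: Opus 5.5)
Both existence and uniqueness follow directly from the definitions of $G^*$, $D_1$ and $G$; no probabilistic input is needed.

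\emph{Existence.} Let $xy \in E(G)$. Since $E(G) \subset E(G') \subset E(G^*)$, we have $xy \in E(G^*)$. By~\eqref{def:Gstar}, $E(G^*) = \bigcup_{J^* \in \cJ^*} E(J^*)$, so there is some $J^* \in \cJ^*$ with $xy \in E(J^*)$. In particular $x,y \in V(J^*)$.

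\emph{Uniqueness.} Suppose there were two distinct blow-ups $J_1^*, J_2^* \in \cJ^*$ with $\{x,y\} \subset V(J_1^*) \cap V(J_2^*)$. Since $xy \in E(G^*)$, the definition~\eqref{def:D1} puts $xy$ into $D_1$. But then $xy \notin E(G') = E(G^*) \setminus D_1$, and hence $xy \notin E(G)$, a contradiction. So the blow-up found above is the only one whose vertex set contains both $x$ and $y$.

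The only point needing care is that $D_1$ is defined through vertex sets, not edge sets. The condition is $\{x,y\} \subset V(J_1^*) \cap V(J_2^*)$, and it does not require $xy \in E(J_i^*)$. This is exactly why uniqueness holds in the strong form stated, over all $J^*$ with $x,y \in V(J^*)$, rather than only over those $J^*$ containing $xy$ as an edge. Blow-ups of either colour, and the case $J_1^* \in \cJ_A^*$, $J_2^* \in \cJ_B^*$, are all covered, since $\cJ^* = \cJ_A^* \cup \cJ_B^*$. I do not expect any real obstacle.
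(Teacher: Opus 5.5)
Your proof is correct and follows essentially the same route as the paper: existence comes from $xy \in E(G^*) = \bigcup_{J^* \in \cJ^*} E(J^*)$, and uniqueness from $xy \notin D_1$. Your remark that $D_1$ is defined through vertex sets, which gives uniqueness over all $J^*$ with $x,y \in V(J^*)$, correctly reflects the definition~\eqref{def:D1}.
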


\begin{proof}
This follows immediately from the definitions. Indeed, since $xy \in E(G^*)$, it follows from~\eqref{def:Gstar} that there must exist a $J^* \in \cJ^*$ with $x,y \in V(J^*)$. Moreover, since $xy \not\in D_1$, it follows from~\eqref{def:D1} that there cannot exist two such members of $\cJ^*$.
\end{proof}

We can now show the first key property of the graph $G$. 

\begin{lemma}\label{lem:G-Ks+1-free} 
$G$ is $K_{s+1}$-free.
\end{lemma}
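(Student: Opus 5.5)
Suppose $K \subset G$ is a copy of $K_{s+1}$, with vertex set $\{x_1,\dots,x_{s+1}\}$. The plan is to show first that every triangle of $G$ lies inside the vertex set of a single blow-up $J^* \in \cJ^*$, and then that all of $K$ lies inside one such blow-up. That is impossible, because each $J^*$ is a blow-up of $K_s$, so $G[V(J^*)] \subset J^*$ is $s$-partite and contains no $K_{s+1}$.

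\emph{Step 1: triangles.} Let $T$ be a triangle in $G$. Since $E(G) \subset E(G')$, $T$ is also a triangle of $G'$. If $V(T) \not\subset V(J^*)$ for every $J^* \in \cJ^*$, then $T \in \cT$. The second deletion step then removes one of its edges, placing it in $D_2$, which contradicts $T \subset G$. Hence every triangle of $G$ satisfies $V(T) \subset V(J^*)$ for some $J^* \in \cJ^*$. By Observation~\ref{obs:unique:Jstar}, each edge of $T$ determines a unique blow-up containing its endpoints, so this $J^*$ is the unique blow-up containing any edge of $T$.

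\emph{Step 2: the whole clique.} By Observation~\ref{obs:unique:Jstar}, the edge $x_1x_2$ lies in a unique $J^*$. For any $i \ge 3$, the triangle $x_1x_2x_i$ lies in some $J'$ by Step~1. Since $x_1,x_2 \in V(J')$, uniqueness forces $J' = J^*$, so $x_i \in V(J^*)$. Thus $V(K) \subset V(J^*)$.

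\emph{Step 3: contradiction.} Every edge $x_ix_j$ of $K$ is an edge of $G^*$, so by \eqref{def:Gstar} it lies in $E(J'')$ for some $J'' \in \cJ^*$ with $x_i,x_j \in V(J'')$. By Observation~\ref{obs:unique:Jstar}, $J'' = J^*$, so $x_ix_j \in E(J^*)$. Hence $K \subset J^*$. But $J^*$ is the $r$-blow-up of $T_s(\ell/r)$, which is an $s$-partite graph, so it contains no $K_{s+1}$.

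The only point requiring care is Step~3: it uses that an edge of $G$ lying inside $V(J^*)$ is actually an edge of $J^*$, and not merely an edge between two vertices of $V(J^*)$ contributed by some other blow-up. This is exactly what the uniqueness in Observation~\ref{obs:unique:Jstar} supplies. Everything else follows directly from the deletion rules.
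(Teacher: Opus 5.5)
Your proof is correct and follows essentially the same route as the paper: a clique not contained in a single $V(J^*)$ yields a triangle in $\cT$, which would have lost an edge in the second deletion step. A clique inside some $V(J^*)$ is, by the uniqueness in Observation~\ref{obs:unique:Jstar}, a subgraph of the $s$-partite graph $J^*$; you just phrase the first case contrapositively and make explicit the uniqueness step the paper leaves implicit in checking that $xyz$ lies in no single $V(J^*)$.
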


\begin{proof}
Let $S \subset V(G)$ be a set of $s+1$ vertices, and suppose that $G[S] = K_{s+1}$. Suppose first that $S \subset V(J^*)$ for some $J^* \in \cJ^*$. By Observation~\ref{obs:unique:Jstar}, 
it follows that $G[S] \subset J^*[S]$. But $J^*$ is $s$-partite, so this is a contradiction. 

On the other hand, if $S \not\subset V(J^*)$ for every $J^* \in \cJ^*$, then there must exist three vertices $x,y,z \in S$ that are not all contained in the same member of $\cJ^*$. Indeed, for any pair of vertices $x,y \in S$, let $J^*$ be the unique element of $\cJ^*$ with $x,y \in V(J^*)$ given by Observation~\ref{obs:unique:Jstar}, and let $z \in S \setminus V(J^*)$. But then one of the edges of the triangle $xyz$ would have been removed in the second deletion step, so we again have a contradiction. 
\end{proof}

\section{Two key lemmas that together imply Theorem~\ref{thm:ER}}\label{sec:three:lemmas}

In this section we will state our two key lemmas, and show that, together with Lemma~\ref{lem:G-Ks+1-free}, they imply the following quantitative version of the upper bound in Theorem~\ref{thm:ER}. 

\begin{theorem}\label{thm:ER:again}
There exists $C > 0$ such that the following holds for all $s \ge 2$ and $n \ge s^C$. There exists a $K_{s+1}$-free graph $G$ with $n$ vertices such that if
$$k \ge C s^3 \displaystyle{\sqrt{n\log n}},$$
then every set of $k$ vertices of $\,G$ contains a copy of $K_s$. 
\end{theorem}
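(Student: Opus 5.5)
We take $G$ to be the random graph constructed in Section~\ref{sec:construction}. By Lemma~\ref{lem:G-Ks+1-free} it is always $K_{s+1}$-free, so it suffices to show that with positive probability every set $S$ of $k \ge Cs^3\sqrt{n\log n}$ vertices contains a copy of $K_s$. The plan is a union bound over $k$-sets $S$, comparing against the $\binom{n}{k} \le e^{k\log(en/k)} \approx e^{(k/2)\log n}$ choices. Fix $S$. A blow-up $J^*\in\cJ^*$ yields a $K_s$ inside $S$ provided some $s$ vertices of $S$ lie in distinct parts of $J^*$ and all $\binom{s}{2}$ pairs among them survive both deletion steps. So for each $J^*$ we consider the intersection $S\cap V(J^*)$ and the edges of $J^*[S\cap V(J^*)]$ that remain in $G$.

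The first key lemma, a ``probability bound'' lemma, will say the following. Let $X_S$ be the number of pairs (or weight) of $S$ lying in blow-ups $J^*$ where $S$ meets at least two parts substantially. Then $\Pr(S \text{ is } K_s\text{-free in } G^*\text{ after }D_1)$ is at most $\exp(-c\, k\log n)$ with $c>1/2$ large. The heuristic is this. Each $J^*$ has $\ell$ vertices, and $S$ has density $k/n$, so $|S\cap V(J^*)|\approx \ell k/n$. With $m$ blow-ups per colour, the number of $J^*$ with $|S\cap V(J^*) \cap P_i|\ge 1$ for all $s$ parts is about $m(\ell k/ sn)^s$ when $\ell k/n$ is small. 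The choice of $\ell,m$ in~\eqref{def:ell:m:r} makes $\ell k/n \approx Cs^2$ a large constant, so a typical $J^*$ meets $S$ in many vertices of every part. The events for different $J^*$ are nearly independent, because the $J_i$ are independent and the $r$-blow-up only groups vertices. Hence the number of $J^*$ well-spread over $S$ is a sum of $m$ independent indicators, each of probability bounded below. We will apply Chernoff (Janson) to get $\exp(-\Omega(m\cdot p))$, and check that $m\,p \gg k\log n$, i.e.\ that $m\ell k/n \gtrsim s\,k\log n\cdot$const, which holds since $m\ell/n \asymp s^2\log n$. The role of $r=n^{1/8}$ is to make the blow-up $J$ live on only $n/r$ points, so that $U_i$-sets intersecting $S$ behave regularly. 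We will need a preliminary concentration statement over partitions: every $S$ meets the $U_i$ and $W_i$ in a controlled way. This can be handled by bounding the number of $U_i$ with $|U_i\cap S|$ large.

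The second key lemma, an ``open edges'' lemma, and the main obstacle, is to show that the deletions do not destroy all these $K_s$'s. For a fixed well-spread $J^*$, we want some $s$-clique in $J^*[S]$ avoiding $D_1\cup D_2$. $D_1$ edges are pairs lying in two blow-ups, and by the sparse choice of $\ell,m$ each vertex pair lies in two blow-ups with tiny probability; a deterministic w.h.p.\ bound on codegrees will handle this. $D_2$ is harder: an edge $xy$ of $J^*$ is deleted if there is $z$ outside $V(J^*)$ with $xz,yz$ in $G'$ forming a non-$J^*$ triangle. Following Hefty--Horn--King--Pfender, the colour rule helps. Deleting minority-coloured edges means an $A$-edge is killed only by an $A$-$B$-$B$ or $A$-$A$-$A$ triangle. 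Triangles within one colour are rare because $A$ is nearly linear (few pairs of blow-ups share two points). Mixed triangles require a $B$-cherry over an $A$-edge, which occurs with probability about $(\text{deg}_B)^2/n \approx (m\ell/n\cdot\ell)^2/n$, small by the choice of $\ell$. So we will show that w.h.p.\ every $J^*$ has at most a small fraction of its edges deleted, uniformly in a way compatible with the union bound. We will then show that a dense subgraph of a complete $s$-partite graph with large parts contains $K_s$ by a counting/supersaturation argument. The delicate point, which I expect to require the most care, is making the deletion bound hold simultaneously for all $S$. To this end we will express it as ``every $J^*$ has few deleted edges at every vertex'', a property independent of $S$. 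Then the Chernoff bound from the first lemma, with $C$ large and $n\ge s^C$, beats $\binom nk$, and the theorem follows.
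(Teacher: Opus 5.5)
\paragraph{The gap: deletions.} The step that breaks is your handling of the deletions. You propose an $S$-independent property (``every $J^*$ has few deleted edges at every vertex'') followed by supersaturation inside $S\cap V(J^*)$. But $S\cap V(J^*)$ typically has only about $k\ell/n=O(s^2)$ vertices, while every vertex of $J^*$ has $\Theta(\ell)$ neighbours in $J^*$. Even a tiny fraction of deleted edges per vertex is therefore far more than an adversarial $S$, chosen after $G$ is revealed, needs in order to sit only on deleted pairs. Supersaturation controls a \emph{random} transversal $s$-set of $J^*$, not the particular set $S\cap V(J^*)$. Moreover, the deleted pairs are highly structured (for instance, all pairs inside a neighbourhood $N_{B^*}(z)$ are closed).

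The quantity to control is a property of $S$ itself: the number of \emph{closed} pairs of $S$, i.e.\ pairs joined by a path of length at most two in $A^*$ or in $B^*$. Every deleted edge of an $A$-blow-up $J^*$ is closed with respect to $B^*$ or to $\mathcal{J}^*_A\setminus\{J^*\}$. Showing that w.h.p.\ \emph{every} $k$-set has at most $\beta k^2$ closed pairs is the real work of the paper (Lemma~\ref{lem:main-open-edges}).

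Even for a fixed $S$, your Chernoff step does not apply to the events ``$J_i^*$ leaves a surviving $K_s$ in $S$'', because deletions in $J_i^*$ are caused by the other $J_j$ and vice versa. The missing decoupling idea is to reveal $B$ and then $\mathcal{J}_A$ in a uniformly random order.
\begin{itemize}
\item A destroyed clique of $J_i^*$ in $S$ contains an edge closed by at most two other blow-ups, and these all precede $J_i$ with probability at least $1/3$.
\item Conditional on what has been revealed, and using $|\pi_A(S)|=|S|$, the clique that the fresh $J_i$ places in $S$ is a uniform $s$-subset of $S$. It therefore meets one of the at most $\beta k^2$ closed pairs with probability at most $\beta s^2$.
\end{itemize}
This gives the bound $e^{-m/16}$ of Lemma~\ref{lem:ind-set-calc}, which beats $\binom{n}{k}$ because $m=16k\log n$.

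\paragraph{Two further incorrect steps.} First, it is not true that each $J^*$ is well spread over $S$ with probability bounded below. If $S$ is a union of $k/r$ classes $U_i$, then $|\pi_A(S)|=k/r$, and a random $A$-blow-up meets $S$ in only $k\ell/(nr)=o(1)$ classes on average. Almost no $A$-blow-up is well spread over such $S$, and your Chernoff bound collapses. No property of a single partition (such as bounding the number of $U_i$ with $|U_i\cap S|$ large) can exclude such $S$. What is true, and is the reason for using two independently blown-up colours, is that w.h.p.\ every set of size $4k$ has projection of size at least $k$ onto \emph{one} of the two partitions (Observation~\ref{lem:partition-property}). One then passes to a $k$-subset with $|\pi_A(S)|=|S|$, say.

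Second, your estimate for mixed triangles is not small. With the parameters in~\eqref{def:ell:m:r}, $(\deg_{B^*})^2/n\asymp (m\ell/n)^2\ell^2/n\asymp s^2\log n$, which grows with $n$. What keeps closed pairs rare is the $r$-blow-up: paths of length two in $B^*$ come in bundles of $r$. So a fixed pair is closed with probability only about $(m\ell/n)^2\ell^2/(rn)=n^{-1/8+o(1)}$. That, rather than regularity of the intersections $U_i\cap S$, is the role of $r=n^{1/8}$. Turning this density heuristic into a bound valid for all $k$-sets, including those meeting many members of $\mathcal{J}_A$ in large sets, is the content of Section~\ref{sec:open:edges}.
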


Since the conclusion of the theorem is monotone in $k$, it will suffice to fix
\begin{equation}\label{def:k}
k = 2^{40} s^3 \sqrt{n \log n}
\end{equation}
and show that with high probability\footnote{Note that, by choosing $C$ sufficiently large, we may assume that $n$ is sufficiently large, and therefore that all events that hold `with high probability' do indeed hold with probability arbitrarily close to $1$.} every set of $4k$ vertices of the random graph $G$ that we constructed in Section~\ref{sec:construction} contains a copy of $K_s$. Let us remark here that with extra work the dependence of $k$ on $s$ in Theorem~\ref{thm:ER:again} can be improved to $O\pig( s^{3/2} \displaystyle{\sqrt{\log s}} \hspace{0.04cm} \pig)$ (see Section~\ref{discussion:sec}), but for simplicity we have chosen to prove the slightly weaker bound stated here. 

The first step is to observe that our two random partitions of $V(G)$ are likely to have the following useful property. Define the \emph{projection} of a set $S \subset V(G)$ onto $V(A)$ to be 
$$\pi_A(S) = \big\{ u_i \in V(A) : U_i \cap S \ne \emptyset \big\}$$ 
and define the projection $\pi_B$ onto $V(B)$ similarly. Note that the minimum of $|\pi_A(S)|$ and $|\pi_B(S)|$ can be as small as $|S|/r$ (this will be important later). 
If $|S| \ge 4k$, however, then with high probability the projection of $S$ onto \emph{one} of the partitions has size at least~$k$. 

\begin{observation}\label{lem:partition-property}
With high probability, we have 
\begin{equation} 
\max\big\{ |\pi_A(S)|,  |\pi_B(S)| \big\} \ge k
\end{equation}
for every set $S \subset V(G)$ with $|S| \ge 4k$.
\end{observation}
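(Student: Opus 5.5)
The plan is to reduce the statement to a union bound over pairs of "unions of parts", one from each of the two independent random partitions in~\eqref{def:partitions}. Since both projections are monotone in $S$, it suffices to treat sets with $|S| = 4k$. Suppose some such $S$ has $|\pi_A(S)| < k$ and $|\pi_B(S)| < k$. Then there are index sets $I, J \subset [n/r]$ with $|I| = |J| = k$ (pad arbitrarily) such that
$$S \subset X_I \cap Y_J, \qquad \text{where} \qquad X_I = \bigcup_{i \in I} U_i \quad \text{and} \quad Y_J = \bigcup_{j \in J} W_j.$$
In particular $|X_I \cap Y_J| \ge 4k$. So it is enough to show that, with high probability, $|X_I \cap Y_J| < 4k$ for every pair $(I,J)$ of $k$-sets of indices.

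\textbf{Fix a pair and bound one probability.} Fix $I$ and $J$, and condition on the partition $(U_i)$, so that $X_I$ is a fixed set of size $kr$. The partition $(W_j)$ is uniformly random and independent of $(U_i)$. Hence the union of any fixed $k$ of its parts, namely $Y_J$, is a uniformly random $kr$-subset of $[n]$. Therefore $|X_I \cap Y_J|$ is hypergeometric with mean
$$\mu = \frac{(kr)^2}{n}.$$
By~\eqref{def:k} we have $k = \Theta_s(\sqrt{n\log n})$, and $r = n^{1/8}$, so
$$\frac{\mu}{k} = \frac{kr}{n} = n^{-3/8 + o(1)}.$$
The standard tail bound for hypergeometric variables (via Chernoff, or directly $\Pr(|X_I \cap Y_J| \ge t) \le \binom{kr}{t}(kr/n)^t$) then gives
$$\Pr\big( |X_I \cap Y_J| \ge 4k \big) \le \left( \frac{e\mu}{4k} \right)^{4k} \le n^{-(3/2 - o(1))k}.$$

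\textbf{Count the pairs and apply the union bound.} This counting step is the only place where care is needed: the crude bound $\binom{n/r}{k}^2 \le n^{2k}$ does \emph{not} beat $n^{-3k/2}$. One must use instead
$$\binom{n/r}{k} \le \left( \frac{en}{rk} \right)^k = n^{(3/8 + o(1))k},$$
which holds because $n/(rk) = n^{3/8 - o(1)}$. There are therefore at most $n^{(3/4 + o(1))k}$ pairs $(I,J)$. The union bound gives a total failure probability of at most
$$n^{(3/4 - 3/2 + o(1))k} = o(1),$$
which completes the proof. The assumption $n \ge s^C$ ensures that the $o(1)$ terms, which absorb the powers of $s$ and $\log n$, are genuinely small.
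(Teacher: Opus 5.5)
Your argument is correct and is essentially the paper's. Both proofs are a first-moment count over triples $(S,I,J)$ with $S \subset X_I \cap Y_J$. The paper sums over $S$ first, bounding $\Pr(|\pi_A(S)|<k) \le \binom{n/r}{k}\binom{rk}{4k}\binom{n}{4k}^{-1}$ and squaring by independence of the two partitions; you sum over $(I,J)$ first. Both totals are at most $\binom{n/r}{k}^2\binom{rk}{4k}(rk/n)^{4k} \le \big(e^3 r^3 k/(16n)\big)^{2k}$.

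There is, however, an arithmetic slip in your per-pair bound. You have $\mu/k = kr^2/n = n^{-1/4+o(1)}$, not $kr/n$, so the per-pair bound is only $n^{-(1-o(1))k}$. This is also the true order of the hypergeometric tail, so your stated $n^{-(3/2-o(1))k}$ is false.

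The union bound still closes, since $n^{(3/4+o(1))k}\cdot n^{-(1-o(1))k} = n^{-(1/4-o(1))k} = o(1)$. This is exactly the margin in the paper's $(2^4 r^3 k/n)^{2k}$. The margin is just thinner than you claimed, which makes your careful count $\binom{n/r}{k} \le (en/rk)^k$ even more essential than you indicated.
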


\begin{proof}
Fix a set $S \subset V(G)$ with $|S| \ge 4k$, and consider the probability that $|\pi_A(S)| < k$. By symmetry, this is the same as fixing the partition and choosing the set $S$ uniformly at random. Thus
$$\Pr\big( |\pi_A(S)| < k \big) \le \binom{n/r}{k}\binom{rk}{4k} \binom{n}{4k}^{-1},$$
since we can first choose $k$ of the $n/r$ vertices of $A$, and then choose the elements of $S$ from the union of the corresponding parts. Since the partitions are independent, and observing that $r^3 k \le s^3 n^{7/8} \log n \ll n$ for $n \ge s^C$, by~\eqref{def:ell:m:r} and~\eqref{def:k},  
it follows that
$$\Pr\Big( \max\big\{ |\pi_A(S)|,  |\pi_B(S)|\big\} < k \Big) \le \bigg(\frac{r^3 n}{k} \bigg)^{2k} \binom{n}{4k}^{-2} \le \bigg(\frac{2^4 r^3 k}{n} \bigg)^{2k} \binom{n}{4k}^{-1} \ll \binom{n}{4k}^{-1},$$
since $\pig( \frac{a}{b} \pig)^b \leq {a \choose b} \leq \pig( \frac{ea}{b} \pig)^b$. The claimed result now follows by a union bound over $S$.
\end{proof}

We will observe later (see Lemma~\ref{lem:S-hit-I}) that if $|\pi_A(S)| \ge k$, then (with sufficiently high probability to allow a union bound over $S$) at least half of the graphs $J^* \in \cJ^*_A$ add at least one copy of $K_s$ to $S$. Our main task will be to bound the probability that all of these copies of $K_s$ are destroyed during the two edge-deletion steps. 

To do so, we will need the following useful notion. Given a graph $H$ and a set of vertices $S \subset V(H)$, we define the set of \emph{closed} edges in $S$ with respect to the graph $H$ to be
\begin{equation}\label{def:closed}
X_H(S) = E\pig( H[S] \pig) \cup E\pig( H^2[S] \pig),
\end{equation}
where $H^2$ is the square of $H$. That is, the edge $xy$ is included in $X_H(S)$ if $x,y \in S$ and there is a path of length at most two in $H$ between $x$ and $y$. To motivate this definition, let us make the following important observation. 

\begin{observation}
Suppose that the edge $e \in E(A^*)$ is removed in one of the two deletion steps, and choose $J^* \in \cJ^*_A$ with $e \in E(J^*)$. Then $e$ must either be closed with respect to $B^*$, or closed with respect to the union of the graphs in $\cJ^*_A \setminus \{ J^* \}$.
\end{observation}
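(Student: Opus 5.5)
The plan is a case analysis on the reason why $e = xy$ was removed, using only the definitions of $D_1$ and $D_2$ together with Observation~\ref{obs:unique:Jstar}. Write $H_A = \bigcup_{J' \in \cJ^*_A \setminus \{J^*\}} J'$. In each case we will exhibit a path of length at most two between $x$ and $y$ in $B^*$ or in $H_A$. Since $x,y$ are the endpoints of $e$, this is exactly the statement that $e \in X_{B^*}(S)$ or $e \in X_{H_A}(S)$ for any $S \ni x,y$.

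\emph{Case 1: $e \in D_1$.} By~\eqref{def:D1} there are distinct $J_1^*, J_2^* \in \cJ^*$ with $x,y \in V(J_1^*) \cap V(J_2^*)$, so at least one of them, say $J'$, is different from $J^*$. We first note that $x$ and $y$ lie in different parts of $J'$. Indeed, $J'$ is a blow-up of a copy of $T_s(\ell/r)$ on $V(A)$ or $V(B)$. The endpoints of $e \in E(J^*)$ lie in distinct blobs $U_i \ne U_j$, because edges of blow-ups join distinct blobs, and $u_iu_j \in E(J)$. If $J' \in \cJ^*_A$, then $u_i,u_j$ are distinct vertices of the copy $J'$. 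Because a copy is only a copy of the Turán graph, $u_i,u_j$ could lie in the same part of $J'$. So we argue instead with a common neighbour: since $s \ge 2$, there is another part of $J'$, and any vertex $z$ of $V(J')$ in a part other than those containing $x$ and $y$ (or in the other part, if they share a part) is adjacent in $J'$ to both $x$ and $y$. This gives either the edge $xy \in J'$ or a path $xzy$ in $J'$. If $J' \in \cJ^*_A$ this path lies in $H_A$, and if $J' \in \cJ^*_B$ it lies in $B^*$. Thus in every sub-case $x$ and $y$ are joined by a path of length $\le 2$ in the appropriate graph. Using a common neighbour avoids needing to know whether $xy$ itself is an edge of $J'$.

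\emph{Case 2: $e \in D_2$.} Then $e$ lies in a triangle $T = xyz \in \cT$ of $G'$, and both $xz$ and $yz$ are edges of $G'$. By Observation~\ref{obs:unique:Jstar} applied to $G'$ (whose proof uses only $G' = G^* \setminus D_1$), each of these edges lies in a unique blow-up. Suppose first that one of $xz, yz$ has colour $B$. If both do, then $xzy$ is a path in $B^*$. If exactly one does, then $T$ has one edge of colour $B$ and two of colour $A$ (namely $e$ and the other). By the deletion rule the $B$-edge is the one chosen, not $e$, which is a contradiction. So we may assume that both $xz$ and $yz$ have colour $A$. If both lie in $J^*$, then $V(T) \subset V(J^*)$, contradicting $T \in \cT$. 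Hence one of them, say $xz$, lies in some $J' \in \cJ^*_A \setminus \{J^*\}$. If $yz$ also lies in some such $J''$, then $xzy$ is a path in $H_A$.

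The remaining sub-case, and the main obstacle, is $xz \in J'$ and $yz \in J^*$. We then have $x,y,z \in V(J^*)$, since $xy,yz \in J^*$, and $x,z \in V(J')$. So the pair $\{x,z\} \subset V(J^*) \cap V(J')$ with $J' \ne J^*$, which means $xz \in D_1$, contradicting $xz \in E(G')$. This closes all cases.
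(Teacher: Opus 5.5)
Your proof is correct and follows the same route as the paper: a case split on the deletion step. In the first step you use that any two vertices of a blow-up are joined by a path of length at most two; in the second you use the minority-colour rule to exclude mixed-colour triangles and $T \in \cT$ to rule out $J^*$ as the host of the other two edges. Two cosmetic points. First, delete the claim that $x,y$ lie in different parts of $J'$, since you immediately retract it. Second, your ``main obstacle'' sub-case is immediate, because $x,y,z \in V(J^*)$ already contradicts $T \in \cT$ without invoking $D_1$. You should also say explicitly that $T$ is the triangle for which $e$ was the chosen edge, since the contradiction in the mixed-colour case depends on this.
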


\begin{proof}
Recall that if $e = xy$ is removed in the first deletion step, then $\{x,y\}$ is contained in the vertex sets of two different members of $\cJ^*$. Now, observe that if $J_1^* \in \cJ^*$, then every pair of vertices in $V(J_1^*)$ is closed with respect to $J_1^*$, and therefore in this case $e$ is closed with respect to one of the graphs in $\cJ^* \setminus \{ J^* \}$.

Similarly, if $e$ is removed in the second deletion step, then there exists a path $xzy$ of length two between $x$ and $y$ in either $A^*$ or $B^*$, with  $z \not\in V(J^*)$. It again follows that $e$ is closed with respect to either $B^*$ or the union of two members of $\cJ^*_A \setminus \{ J^* \}$, as claimed.
\end{proof}

Set $\beta = 2^{-7} s^{-2}$, and define, for each set $S \subset V(G)$, the event 
$$\cE_\beta(S) = \Big\{ \pigl| X_{A^*}(S) \cup X_{B^*}(S) \hspace{0.02cm} \pig| \le \beta k^2 \Big\}$$
that there are not too many closed edges in $S$ with respect to either $A^*$ or $B^*$. We are now ready to state the two key lemmas that together imply Theorem~\ref{thm:ER:again}. The first of these, which we will prove in Section~\ref{sec:prob:bound}, says that if $\cE_\beta(S)$ holds for a set $S$ with projection at least $k$ onto one of the partitions, then $G[S]$ is extremely likely to contain a copy of $K_s$. 

\begin{lemma}\label{lem:ind-set-calc}
Let $\hspace{0.02cm} S \subset V(G)$. If $\hspace{0.03cm} |\pi_A(S)| \ge k$, then 
$$\Pr\Big( \cE_\beta(S) \cap \big\{ G[S] \text{ is $K_s$-free} \big\} \Big) \le e^{-m/16}.$$
\end{lemma}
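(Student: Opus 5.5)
The plan is to reveal the randomness in a sequential order: the $m$ copies $J_1^A,\dots,J_m^A$ are added one at a time, and we treat their contributions as (nearly) independent trials. Fix $S$ with $|\pi_A(S)|\ge k$ and fix a set $P\subset \pi_A(S)$ of exactly $k$ vertices of $A$, choosing one representative $x_u\in U_u\cap S$ for each $u\in P$. The aim is to show that for each $i$, conditional on everything else (the partitions, all of $\cJ_B$, and all $J_j^A$ with $j\ne i$), the event ``$J_i^{A*}$ contributes a copy of $K_s$ to $G[S]$ that survives both deletion steps'' has probability bounded below by a constant, say $1/2$, \emph{provided} $\cE_\beta(S)$ holds. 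The surviving copies are then independent across $i$, and Chernoff's bound (or simply the bound $2^{-m}$, improved to $e^{-m/16}$) gives the result.

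The first step is to show that $J_i^A$ hits $P$ in every one of its $s$ parts with large multiplicity. Each part of $J_i^A$ is a uniformly random set of $\ell/(rs)$ vertices of $A$, hence meets $P$ in about $k\ell/(sn)\cdot$ vertices, which by~\eqref{def:ell:m:r} and~\eqref{def:k} is of order $2^{3}s$, a large constant times $s$. Concentration shows that with probability at least $3/4$ every part meets $P$ in at least $q$ vertices, for some $q \asymp s$, so $J_i^{A*}[S]$ contains a complete $s$-partite graph on the chosen representatives with parts of size at least $q$. This gives $\Theta(q^s)$ candidate copies of $K_s$ and about $\binom{s}{2}q^2$ candidate edges.

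The second step is to show that few of these candidate edges are deleted. By the Observation preceding the lemma, a deleted edge $e\in E(J_i^{A*})$ is closed either with respect to $B^*$ or with respect to $\bigcup_{j\ne i}J_j^{A*}$. The first kind lies in $X_{B^*}(S)$, and the second kind lies in $X_{A^*}(S)$ (after removing $J_i$, which only shrinks the set). Both are determined without $J_i^A$, except for the second-deletion paths through $J_i^{A*}$ itself, which by definition do not cause deletion. On $\cE_\beta(S)$ there are at most $\beta k^2$ such ``bad'' pairs among the $\binom{k}{2}$ pairs of $P$ representatives. Since each pair of $P$ is an edge of the random $J_i^A$ with probability about $(\ell/n)^2\cdot$const, the expected number of bad candidate edges is about $\beta k^2(\ell/n)^2 \approx \beta\cdot s^{-2}\cdot q^2 s^2$. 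With $\beta=2^{-7}s^{-2}$ this is a small fraction of $q^2$ per pair of parts, and Markov's inequality bounds the probability that it exceeds $q^2/(4s^2)$ by $1/4$. Any $s$-partite graph with parts of size $q$ missing fewer than $q^2/(4s^2)$ edges still contains a $K_s$, for example by picking a vertex in each part greedily or by a union bound over random transversals. This bound is the main obstacle. The difficulty is that $\cE_\beta(S)$ must be conditioned on while still using the randomness of $J_i^A$, and the bad set depends on $J_i$ only monotonically.

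To handle this, I would define the bad set $X^{(i)}$ using only $B^*$ and $\{J_j^A\}_{j\ne i}$. On $\cE_\beta(S)$ we have $|X^{(i)}|\le\beta k^2$, and this is determined before $J_i$. The per-$i$ success probability is then at least $1/2$ given the past, so the number of successful $i$ stochastically dominates $\mathrm{Bin}(m,1/2)$ on $\cE_\beta$. The probability that there are none is at most $2^{-m}\le e^{-m/16}$.
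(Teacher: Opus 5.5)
Your per-copy estimate is essentially right. Every deleted edge of $J_i^{A*}$ lies in the set $X^{(-i)}$ of pairs closed by $B^*$ or by $\bigcup_{j\ne i}J_j^{A*}$. The event $|X^{(-i)}|\le\beta k^2$ is implied by $\cE_\beta(S)$ and does not involve $J_i^A$. So, conditional on everything except $J_i^A$, a fresh copy yields a surviving $K_s$ with probability bounded below.

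Your Markov step does need repair. The expected number of bad candidate edges is $\beta(k\ell/n)^2=64\beta s^4=s^2/2$, which is of order $q^2$, not $\beta q^2$. Compare it with $q^2$ rather than $q^2/(4s^2)$: fewer than $q^2$ deletions cannot destroy every transversal $K_s$ of a complete $s$-partite graph with parts of size at least $q$.

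The genuine gap is the last step. You prove a bound conditional on \emph{all} other copies, then use it as if it were conditional on the past. These are not interchangeable, and bounds of the first kind do not multiply. Whether $J_i^{A*}$ keeps a clique depends on later copies, which can delete its edges in either deletion step. So your success events are neither adapted to a sequential filtration nor independent across $i$. For an abstract counterexample, let $J_1,\dots,J_m$ be independent fair bits and $F_i=\{J_1+\dots+J_m\text{ is even}\}$ for every $i$. Then $\Pr(F_i\mid J_j,\,j\ne i)=1/2$ for each $i$, yet $\Pr(\bigcap_iF_i)=1/2$. In this setting the positive correlation is real, since two overlapping copies destroy each other's cliques at once. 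The obvious patches also fail. Restricting to pairs closed by the past restores adaptedness, but the clique you find may still be killed by a later copy. A union bound over the possible final closed set costs $e^{\Theta(\beta k^2\log(1/\beta))}$, far more than $e^{m}$.

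The missing idea is the paper's random revealing order. Let $I=\{x_u:u\in P\}$ be your set of representatives. Reveal $B$ first, then the members of $\cJ_A$ in a uniformly random order $\pi$; the graph itself does not depend on $\pi$. For each $i$ such that $J_i^{A*}[I]$ contains a $K_s$, fix a \emph{single} uniformly random such clique $S_i$.
\begin{itemize}
\item If $G[I]$ is $K_s$-free, some edge of $S_i$ was deleted. That edge is closed by $B^*$ or by at most two other members of $\cJ_A^*$, and these all precede $J_i^A$ with probability at least $1/3$ over $\pi$.
\item Hence the \emph{adapted} count $Y_\pi$ of indices whose clique already contains a closed edge when it is exposed has $\Ex_\pi[Y_\pi]$ at least a third of the number of such $i$. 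That number is at least $m/2$ with probability $1-e^{-m}$.
\item For each fixed $\pi$, $S_{\pi(i)}$ is a uniform $s$-subset of $I$ given the past. At most $\beta k^2$ pairs of $I$ are closed so far; this is a past-measurable event implied by $\cE_\beta(S)$.
\item So each indicator has conditional probability at most $\beta s^2=2^{-7}$, whence $\Pr(\cE_\beta(S)\cap\{Y_\pi\ge m/12\})\le\binom{m}{m/12}(\beta s^2)^{m/12}$.
\end{itemize}
This requires one fixed clique per copy. In your ``some clique survives'' formulation, different cliques of $J_i^{A*}$ could be killed by different copies, some earlier and some later. You then could not conclude that, with probability $1/3$, they are all killed from the past.
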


The proof of Lemma~\ref{lem:ind-set-calc} is not too difficult: the idea is to reveal the elements of $\cJ_A$ in a random order, and note that each removed edge was closed by members of $\cJ^*$ that were revealed earlier in the order with probability at least $1/3$. Since the event $\cE_\beta(S)$ implies that a random copy of $K_s$ contains a closed edge with probability at most $\beta s^2$, the claimed bound follows via a simple calculation.

Our second key lemma bounds the probability that the event $\cE_\beta(S)$ fails to hold.

\begin{lemma}\label{lem:main-open-edges} 
With high probability, we have 
$$\pig| \hspace{0.015cm} X_{A^*}(S) \pig| \le \beta \binom{k}{2},$$
for every $S \subset V(G)$ with $|S| = k$. 
\end{lemma}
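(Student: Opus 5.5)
The plan is to reduce the statement to a weighted statement about the square of $A$ itself, and then prove that by first-moment and concentration estimates together with a union bound. The number of sets $S$ we must union-bound over is effectively reduced by the blow-up factor $r = n^{1/8}$.

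\emph{Reduction to $A$.} Since each $U_i$ is an independent set in $A^*$, and the edges of $A^*$ are complete bipartite graphs $K_{r,r}$ between parts, a pair $xy$ with $x \in U_i$, $y \in U_j$ is closed with respect to $A^*$ if and only if either $i=j$ and $u_i$ is non-isolated in $A$, or $u_iu_j \in E(A) \cup E(A^2)$. Writing $w_i = |S \cap U_i| \le r$, so that $\sum_i w_i = k$, we obtain
$$|X_{A^*}(S)| \le \sum_i \binom{w_i}{2} + \sum_{u_iu_j \in E(A)\cup E(A^2)} w_i w_j \le \frac{rk}{2} + \sum_{u_iu_j \in E(A)\cup E(A^2)} w_i w_j .$$
The first term is at most $rk \ll \beta k^2$, since $r = n^{1/8} \ll k/s^2$. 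It therefore suffices to show that, with high probability, every weight function $w \colon V(A) \to [0,r]$ with total weight $k$ satisfies $\sum_{u_iu_j \in A \cup A^2} w_iw_j \le \beta k^2/4$. I would split the weights dyadically: let $P_t = \{u_i : 2^t \le w_i < 2^{t+1}\}$ for $0 \le t \le \log_2 r$, so that $|P_t| \le k/2^t$. This leaves $O(\log^2 r)$ pairs of levels, and for each pair $(t,t')$ we need
$$e_{A\cup A^2}(P_t,P_{t'}) \le \frac{\beta k^2}{(\log_2 r+1)^2 \, 2^{t+t'+4}}.$$

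\emph{Expected count.} Each $J$ has $\ell/r$ vertices among the $n/r$ vertices of $A$. Hence a fixed pair $u,v$ lies in a common $J$ with probability about $m(\ell/n)^2$. It is joined by a path through two distinct members of $\cJ_A$ with probability about $(n/r)\cdot (m\ell/n)^2 \cdot (m (\ell/n)^2)^{\,\cdot}$; more cleanly, the density of $A \cup A^2$ is
$$p \approx \frac{m\ell^2}{n^2} + \frac{r}{n}\Big(\frac{m\ell^2}{n}\Big)^2\frac{1}{n^{\,}}\cdot \frac{1}{r^{2}}\cdot r \;=\; O\Big(\frac{s^2\log n}{r}\Big),$$
using $m\ell^2/n = 2^{-30}s\sqrt{n\log n}$. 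The key gain is exactly the factor $1/r$, coming from the fact that two random copies of $J$ in $V(A)$ rarely intersect. So for sets of sizes $a,b$ the expected number of closed pairs is $O(ab\, s^2\log n / r) \ll \beta ab/\log^2 r$.

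\emph{Concentration and union bound (main obstacle).} I expect the main difficulty to be the upper tail of $e_{A\cup A^2}(P,P')$, uniformly over all $P,P'$ of sizes $a \le k/2^t$ and $b \le k/2^{t'}$. A union bound costs $\binom{n/r}{a}\binom{n/r}{b} = e^{O((a+b)\log n)}$. The count is not a sum of independent variables: it is controlled by which $J \in \cJ_A$ hit $P$ and $P'$, and by how these copies pairwise intersect. I would first establish the following typical-degree properties, each with failure probability small enough to survive the union bound.

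(i) Every vertex of $A$ lies in $O(s^2\log n)$ copies $J$, by a Chernoff bound, since the mean is $m\ell/n = 2^7s^2\log n$.

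(ii) Every two copies $J,J'$ intersect in $O(1)$ vertices, and each $J$ meets $O(\log n)$ others; the mean number of other copies meeting a given $J$ is $m\ell^2/(rn) = n^{3/8}$-ish divided by $r$, so I would need to check this and, if necessary, use instead a bound on the number of intersecting pairs.

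Given these properties, closed pairs within $P \times P'$ are counted by $\sum_J |P\cap J||P'\cap J|$, plus the same sum over intersecting pairs $(J,J')$. Each $|P \cap J|$ is a sum over the $m$ independent copies, so I would bound the sums $\sum_J |P\cap J|^2$ using independence of the $J$'s across $\cJ_A$: bounded-difference/Chernoff estimates with exponent $\Omega(\beta ab/(r\log^2 r))$. The conditions required are that $\beta ab/(r\log^2 r) \gg (a+b)\log n$ when weighted appropriately, i.e.\ that $2^{-t-t'}k^2$ exceeds the union-bound entropy $(k/2^t + k/2^{t'})\log n$ by a large factor. This holds because $k \gg r\log n \cdot \mathrm{poly}(s)$. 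In the regime where one of the levels is tiny, the trivial bound $w_i \le r$ together with the degree bound (i) should instead give the estimate directly, since each $u_i$ has only $O(s^2 \ell \log n \cdot \mathrm{poly}\log)$ closed partners. I would attempt this case split first and tune the constants only at the end.
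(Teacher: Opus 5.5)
Your reduction to a weighted count on $A$ is correct and is the paper's starting point too. The rest of the plan has two genuine problems.

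\textbf{The per-level budget is false at the top level.} Splitting the budget uniformly over the $(\log_2 r+1)^2$ pairs of levels loses a factor $\Theta((\log n)^2)$. The parameters cannot absorb this, because they beat the natural obstruction only by a constant factor.

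Let $d=m\ell/n=2^7s^2\log n$, so that $k=2^{70}s^2\,d\ell$. Choose $q=2^{70}s^2$ vertices of $A$ of degree about $d$ in $\cA$, and let $P$ be the union of the vertex sets of all copies $J\in\cJ_A$ through them. Then $|P|\lesssim qd\ell/r=k/r$. Each ``star'' of $d$ copies through a common vertex spans $\Omega((d\ell/r)^2)$ closed pairs, via paths through the centre. Hence $|X_A(P)|=\Omega(kd\ell/r^2)=\Omega(2^{-63}\beta k^2/r^2)$.

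Now take $S=\bigcup_{u_i\in P}U_i$. All weights equal $r$, so everything lies in your top level. There you require $e(P_t,P_t)<\beta k^2/(4r^2(\log_2 r+1)^2)$, which fails for all large $n$. This is consistent with the lemma itself, which has room for a constant factor here but not for $(\log r)^2$.

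So the level contributions must be summed geometrically, with no logarithmic loss. The paper does this in two steps.
\begin{itemize}
\item It replaces the weights by a random set $T\subset V(A)$ that contains $u_i$ with probability $|U_i\cap S|/r$. This reduces everything to unweighted sets of size $t\approx k/r$ (Lemma~\ref{lem:A-open-edges}).
\item It then decomposes dyadically by the intersection sizes $a_i=|V(J_i^A)\cap T|$, not by weights. The relevant sums, such as $\sum_{x\ge1}2^x|D_x|=O(t)$ and $\sum_{y\le L}2^y\le 4\ell/r$, are then geometric.
\end{itemize}

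\textbf{The uniform control over all sets is missing.} It will not come from the density $p=O(s^2\log n/r)$ plus Chernoff or bounded-difference inequalities, for three reasons.
\begin{itemize}
\item The extremal sets are structured like the one above, so a first-moment density says nothing about the worst case.
\item A single copy can change $e_{X_A}(P,P')$ by up to roughly $(d\ell^2/r^2)^2$, so the Lipschitz constants are far too large for your exponent.
\item Your property (ii) is false: a copy meets about $d\ell/r$ others, which is polynomial in $n$. Similarly, a typical vertex has about $d^2\ell^2/r^2$ closed partners, not $\ell\cdot\mathrm{polylog}$.
\end{itemize}

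What is needed starts from the deterministic inequality
\[
|X_A(T)|\le\sum_i\binom{a_i}{2}+\sum_{ij\in E(H)}a_ia_j,
\]
where $H$ is the intersection graph of the copies. On top of this you need first-moment counts over intersection configurations showing three things.
\begin{itemize}
\item $H$ is sparse on sets of copies: $e_H(Y,Z)\le n^{-c}|Y||Z|+C^2s^2(|Y|+|Z|)\log n$.
\item $e_H(Y,Z)\le C|Z|$ when $|Y|\ll|Z|$.
\item Heavy intersections are rare: $\sum_J|V(J)\cap T|\,\1[|V(J)\cap T|\ge C]\le Ct$.
\end{itemize}
These properties, together with the degree bound and the ``few vertices of degree $\ge 8$'' property, give $|X_A(T)|\le C^6s^2t\ell\log n/r+t^2n^{-c/6}$. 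The choice $k=2^{77}s^4\ell\log n$ is calibrated to beat exactly this quantity, by a constant factor.
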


We will prove Lemma~\ref{lem:main-open-edges} in Section~\ref{sec:open:edges}. The basic idea is that the expected density of closed edges in the graph $A$ is small because it only has $n/r$ vertices, and the density of closed edges is not significantly affected by blowing up the graph. The main challenge is to deal with subsets of $V(A)$ that have large intersection with many members of~$\cJ_A$, which requires some careful counting, but is otherwise again reasonably straightforward. 

Let us now show how to deduce our main theorem from the two lemmas above. 

\begin{proof}[Proof of Theorem~\ref{thm:ER:again}, assuming Lemmas~\ref{lem:ind-set-calc} and~\ref{lem:main-open-edges}]
Let $G$ be the graph constructed in Section~\ref{sec:construction}, and recall from Lemma~\ref{lem:G-Ks+1-free} that $G$ is $K_{s+1}$-free. It therefore remains to prove that, with high probability, every set of size $4k$ contains a copy of $K_s$. 

First, by Observation~\ref{lem:partition-property}, with high probability the partitions~\eqref{def:partitions} of $V(G)$ used to blow up the graphs $A$ and $B$ are chosen so that
$$\max\big\{ |\pi_A(I)|,  |\pi_B(I)|\big\} \ge k$$
for every set $I \subset V(G)$ with $|I| \ge 4k$. Fix such a set $I$, and choose a subset $S \subset I$ with $|S| = k$ and $|\pi_A(S)| = |S|$, say. Observe that, by Lemma~\ref{lem:ind-set-calc}, we have
$$\Pr\Big( \cE_\beta(S) \cap \big\{ G[S] \text{ is $K_s$-free} \big\} \Big) \le e^{-m/16},$$
and that with high probability the event $\cE_\beta(S)$ holds for every $k$-set $S$, by Lemma~\ref{lem:main-open-edges}. 

Taking a union bound over the $\binom{n}{k}$ choices for the set $S$, it follows that 
$$\Pr\Big( \exists \, I \subset V(G) : |I| \ge 4k \text{ and } G[I] \text{ is $K_s$-free} \Big) \le \binom{n}{k} e^{-m/16} + o(1) = o(1)$$
as required, since $m = 16 k \log n$, by~\eqref{def:ell:m:r} and~\eqref{def:k}.
\end{proof}

\section{Sets with few closed edges are unlikely to be $K_s$-free}\label{sec:prob:bound}

The main aim of this section is to prove Lemma~\ref{lem:ind-set-calc}, which bounds the probability that a set with projection $k$ is $K_s$-free. Recall that we are given a set $I \subset V(G)$ with $|\pi_A(I)| \ge k$, and are required to show that 
$$\Pr\Big( \cE_\beta(I) \cap \big\{ G[I] \text{ is $K_s$-free} \big\} \Big) \le e^{-m/16}.$$
Note that the events $\cE_\beta(I)$ and $K_s \not\subset G[I]$ are both decreasing properties of the set $I$, so (replacing $I$ by a subset if needed) we may assume that $|I| = |\pi_A(I)| = k$. 

To begin, for each $k$-set $T \subset V(A)$ we need a lower bound on the size of the set
$$Q_A(T) = \big\{ i \in [m] :  K_s \subset J^A_i\pig[ T \pig] \big\}.$$
That is, we need to bound the number of $J \in \cJ_A$ which contain a copy of $K_s$ in the set $T$. The following lemma crucially uses the fact that $k \ell / n \ge s \log s$; its failure for smaller values of $k$ is a key bottleneck for the dependence on $s$ required for our construction. 

\begin{lemma}\label{lem:S-hit-I} 
For every $T \subset V(A)$ with $|T| \ge k$ we have 
$$\Pr\bigg( |Q_A(T)| \le \frac{m}{2} \bigg) \le e^{-m}.$$
\end{lemma}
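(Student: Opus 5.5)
Fix $T \subset V(A)$ with $|T| \ge k$; by monotonicity (a copy of $K_s$ in $J_i^A[T']$ for $T' \subset T$ is also one in $J_i^A[T]$) we may assume $|T| = k$. The events $\{i \in Q_A(T)\}$ for $i \in [m]$ are independent, since the $J_i^A$ are chosen independently, and they all have the same probability $p$. So $|Q_A(T)| \sim \mathrm{Bin}(m,p)$. The plan is to show that $p$ is close to $1$, say $p \ge 1 - e^{-8}$, and then apply a simple binomial tail bound.

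\emph{Step 1: lower bound on $p$.} A uniformly random copy of $T_s(\ell/r)$ in $V(A)$ is obtained by choosing disjoint random sets $V_1,\dots,V_s$, each of size $\ell/(rs)$, from the $n/r$ vertices. Then $K_s \subset J[T]$ exactly when $V_j \cap T \ne \emptyset$ for every $j$. By a union bound over the $s$ parts,
$$1 - p \le s \cdot \Pr(V_1 \cap T = \emptyset) \le s\Big(1 - \frac{kr}{n}\Big)^{\ell/(rs)} \le s \exp\Big(-\frac{k\ell}{ns}\Big).$$
Here we use that $V_1$ is a uniform $\ell/(rs)$-subset and that $|T|/(n/r) = kr/n$. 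By~\eqref{def:ell:m:r} and~\eqref{def:k},
$$\frac{k\ell}{n} = \frac{2^{40}s^3}{2^{37}s} = 8s^2,$$
so $1-p \le s e^{-8s} \le e^{-7s} \le e^{-14}$ for $s\ge 2$. This is the step the paper flags: we need $k\ell/n \gtrsim s\log s$ to beat the factor $s$ from the union bound. With our choices this holds comfortably.

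\emph{Step 2: binomial tail.} The event $|Q_A(T)| \le m/2$ means that at least $m/2$ of the $m$ independent trials fail, each with probability $q = 1-p \le e^{-14}$. Hence
$$\Pr\big(|Q_A(T)| \le m/2\big) \le \binom{m}{m/2} q^{m/2} \le 2^m e^{-7m} \le e^{-m},$$
which is the claimed bound.

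I expect no real obstacle here. The only points needing care are the correct sampling model for a random copy of $J$ (a uniform random ordered choice of disjoint parts, with the part sizes $\ell/(rs)$ assumed to be integers) and checking the arithmetic $k\ell/n = 8s^2$, which is what makes $s e^{-k\ell/(ns)}$ small enough.
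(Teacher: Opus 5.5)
Your proof is correct and matches the paper's argument: a union bound over the $s$ parts gives $\Pr\big(K_s \not\subset J[T]\big) \le s\,e^{-k\ell/(sn)}$ with $k\ell/n = 8s^2$, and a binomial tail bound finishes. The only differences are cosmetic: you write out the tail estimate $\binom{m}{m/2} q^{m/2}$ where the paper just cites Chernoff, you keep $e^{-8s}$ instead of weakening it to $s^{-8}$, and your reduction to $|T| = k$ is harmless but unnecessary, since $|T| \ge k$ already gives $(1 - kr/n)^{\ell/(rs)}$ directly.
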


\begin{proof} 
Observe that $|Q_A(T)|$ is a binomial random variable, and that 
$$\Pr\big( K_s \not\subset J\pig[ T \pig] \big) \le s^{-7}$$ 
for each $J \in \cJ_A$. Indeed, each of the $s$ parts of $J$ is disjoint from the set $T$ with probability at most
$$\bigg( 1 - \frac{kr}{n} \bigg)^{\ell/sr} \le \exp\bigg( - \frac{k \ell}{sn} \bigg) \le s^{-8},$$
since $k \ell / n = 8 s^2 \ge 8 s \log s$. The claimed bound now follows by Chernoff's inequality.
\end{proof}



We are now ready to prove the main lemma of the section. 

\begin{proof}[Proof of Lemma~\ref{lem:ind-set-calc}] 
Let $T = \pi_A(I)$. By Lemma~\ref{lem:S-hit-I}, it will suffice to show that 
\begin{equation}\label{eq:ind-set-calc:aim}
\Pr\Big( \big\{ |Q_A(T)| \ge m/2 \big\} \cap \cE_\beta(I) \cap \big\{ G[I] \text{ is $K_s$-free} \big\} \Big) \le e^{-m/15}.
\end{equation}
To do so, we reveal the entire graph $B$, and then reveal the graphs $J \in \cJ_A$ in a uniformly random order. To be precise, let $\pi$ be a uniformly random permutation of $[m]$, and for each $0 \le i \le m$ and each set $S \subset V(G)$, define $X_\pi^{(i)}(S)$ to be the set of closed edges in $S$ after revealing $B$ and  $J^A_{\pi(1)},\ldots,J^A_{\pi(i)}$. That is
$$X_\pi^{(i)}(S) = X_{B^*}(S) \cup X_{A_\pi^*(i)}(S),$$
where $A_\pi^*(i)$ is the union of the blow-ups of the graphs $J^A_{\pi(1)},\ldots,J^A_{\pi(i)}$. 

Now, for each $i \in Q_A(T)$, let $J_i^*$ be the blow-up of $J_i^A$, and choose a set $S_i \subset I$ such that $J_i^*[S_i] = K_s$, uniformly at random from the available choices. Now define
$$Y_\pi = \sum_{i \ina Q_A(T)} \1\big[ X_\pi^{(i-1)}(S_{\pi(i)}) \ne \emptyset \big],$$
so $Y_\pi$ is the number of sets $\{ S_i : i \in Q_A(T) \}$ that contain a closed edge at the moment they are exposed, using the order $\pi$. We make the following key observation. 

\begin{claim}\label{permutation:claim}
If $\hspace{0.03cm} G[I]$ is $K_s$-free, then 
$$\Ex_\pi\pig[ Y_\pi \pig] \ge \frac{|Q_A(T)|}{3}.$$
\end{claim}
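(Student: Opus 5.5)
The plan is to show that each $i \in Q_A(T)$ contributes at least $1/3$ to $\Ex_\pi[Y_\pi]$, and then sum by linearity of expectation. The sets $S_i$ are chosen using only the graphs $J_i^A$ and the set $I$, so they do not depend on $\pi$. Writing $j = \pi^{-1}(i)$ for the position of $i$ in the order, we have
$$\Ex_\pi\pig[ Y_\pi \pig] = \sum_{i \ina Q_A(T)} \Pr_\pi\Big( X_\pi^{(\pi^{-1}(i)-1)}(S_i) \ne \emptyset \Big).$$
It therefore suffices to show that each probability on the right-hand side is at least $1/3$.

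\emph{Step 1: find a deleted edge.} Fix $i \in Q_A(T)$. Since $J_i^*[S_i] = K_s$ and $G[I]$ is $K_s$-free, some edge $e = xy$ of $J_i^*$ with $x,y \in S_i$ must lie in $D_1 \cup D_2$.

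\emph{Step 2: identify a small witness set.} I will show that $e$ is closed with respect to $B^*$, or with respect to $J_a^* \cup J_b^*$ for some indices $a,b \ne i$ (possibly $a = b$). This is the observation stated before $\cE_\beta$, made more precise by recording how many members of $\cJ_A$ are responsible.
\begin{itemize}
\item Suppose $e \in D_1$. Then $\{x,y\} \subset V(J_1^*)$ for some $J_1^* \ne J_i^*$, and every pair of vertices in $V(J_1^*)$ is closed with respect to $J_1^*$. If $J_1^* \in \cJ^*_B$, then $e$ is closed by $B^*$. Otherwise $J_1^* = J_a^*$ with $a \ne i$.
\item Suppose $e \in D_2$. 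Then there is a triangle $xyz \in \cT$ in $G'$ whose vertex set is not contained in any member of $\cJ^*$. In particular $z \notin V(J_i^*)$.
\begin{itemize}
\item If the triangle has mixed colours, the deletion rule says the deleted $A$-edge $e$ is the unique $A$-edge. Hence $xz, zy \in E(B^*)$, and $e$ is closed by $B^*$.
\item If all three edges are $A$-edges, then by Observation~\ref{obs:unique:Jstar} applied in $G'$, each of $xz$ and $zy$ lies in a unique member of $\cJ^*_A$, say $J_a^*$ and $J_b^*$. Neither of these is $J_i^*$, since $z \notin V(J_i^*)$. Thus $e$ is closed with respect to $J_a^* \cup J_b^*$.
\end{itemize}
\end{itemize}

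\emph{Step 3: the permutation estimate.} Since $B$ is revealed first, $e \in X_\pi^{(j-1)}(S_i)$ holds in each of the following cases:
\begin{itemize}
\item $e$ is closed by $B^*$, which holds with probability $1$;
\item $e$ is closed by $J_a^*$ alone, or $a = b$, and $a$ precedes $i$ in $\pi$, which has probability $1/2$;
\item $e$ is closed by $J_a^* \cup J_b^*$ with $a \ne b$, and $i$ comes last among $\{i,a,b\}$ in $\pi$, which has probability $1/3$.
\end{itemize}
In every case the probability is at least $1/3$, and summing over $i \in Q_A(T)$ gives the claim.

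The main point requiring care is Step 2, specifically checking that the witness indices differ from $i$ and number at most two. This relies on $z \notin V(J_i^*)$, on uniqueness of the containing blow-up for surviving edges of $G'$, and on the minority-colour rule, which forces the mixed case to be witnessed entirely by $B^*$.
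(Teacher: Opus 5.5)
Your proof is correct and takes the same route as the paper. You use linearity over $i \in Q_A(T)$ and pick a deleted edge $e$ of the clique on $S_i$. You then note that $e$ is closed by a member of $\cJ^*$ revealed before $J_i^A$ with probability at least $1/2$ (first deletion step), or by a pair of members revealed before $J_i^A$ with probability at least $1/3$ (second deletion step).

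Your Step~2 also makes explicit two points the paper leaves implicit. The witnesses differ from $J_i^*$ because $z \notin V(J_i^*)$. And the minority-colour rule rules out $e$ being closed only by a mixed $A$--$B$ path, which would not be captured by $X_{B^*}(S_i) \cup X_{A^*_\pi(j-1)}(S_i)$.
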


\begin{proof}[Proof of claim] 
Since $G[I]$ is $K_s$-free, for each $i \in Q_A(T)$ the clique with vertex set $S_i$ must have an edge $e$ deleted from it during one of the two deletion steps. If $e$ was deleted in the first deletion step, then it must also be contained in some other $J^* \in \cJ^*$, and this $J^*$ was revealed before $J^A_i$ with probability at least $1/2$. Moreover, if it was revealed first, then $e$ was already closed at step $\pi(i)$, since every pair of vertices of each $J^* \in \cJ^*$ is closed.\footnote{Note that in the case $s = 2$ this is true because we included the edges of $H[I]$ in the set $X_H(I)$.} 

Similarly, if $e$ was deleted in the second deletion step, then it was closed by a pair of elements of $\cJ^*$, and (recalling that $B$ was revealed first) with probability at least $1/3$, both of these graphs were revealed before $J^A_i$. It follows that each element of $Q_A(T)$ contributes~$1$ to the sum in the definition of $Y_\pi$ with probability at least $1/3$, as required.
\end{proof}

Since $Y_\pi \le |Q_A(T)|$, it follows that if $G[I]$ is $K_s$-free, then
$$\Pr_\pi\left( Y_\pi \ge \frac{|Q_A(T)|}{6} \right) \ge \frac{1}{6},$$
and hence the left-hand side of~\eqref{eq:ind-set-calc:aim} is at most
\begin{equation}\label{eq:ind-set-calc:next:aim}
6 \cdot \Ex_\pi\Big[ \Pr\Big( \cE_\beta(I) \cap \big\{ Y_\pi \ge m/12 \big\} \Big) \Big].
\end{equation}

To bound the probability inside the expectation, we will partition the event according to the set $Q_A(T)$, and reveal the elements of $\cJ_A$ in the order $\pi$, conditioning on the events $\{ S_i \subset I \}$ for each $i \in Q_A(T)$, and on the event that $J_i^A\pig[ T \pig]$ is $K_s$-free for each $i \not\in Q_A(T)$. Note that all copies of $K_s$ in $J^*$ are equivalent, so by symmetry we can fix any of them to be the pre-image of $S_i$. Moreover, since $|\pi_A(I)| = |I|$, under this conditioning $S_i$ is a uniform subset of $I$ of size $s$, and therefore if $\cE_\beta(I)$ holds, then 
\begin{equation}\label{eq:supersaturation}
\Pr\Big( X_\pi^{(i-1)}(S_{\pi(i)}) \ne \emptyset \;\big|\; \cF_{i-1}^\pi \Big) \le \beta k^2 {s \choose 2} {k \choose 2}^{-1} \le \beta s^2 = 2^{-7}
\end{equation}
for each $i \in [m]$ such that $\pi(i) \in Q_A(T)$, where $\cF_{i-1}^\pi$ is the $\sigma$-algebra generated by the random graphs $B$ and $J^A_{\pi(1)},\ldots,J^A_{\pi(i-1)}$. It follows that
$$\Pr\Big( \cE_\beta(I) \cap \big\{ Y_\pi \ge m/12 \big\} \Big) \le {m \choose m/12} \big( \beta s^2 \big)^{m/12} \le \big( 12e \cdot 2^{-7} \big)^{m/12} \le e^{-m/12},$$
for every permutation $\pi$ and every set $Q_A(T)$. Since~\eqref{eq:ind-set-calc:next:aim} is an upper bound on the left-hand side of~\eqref{eq:ind-set-calc:aim}, it follows that
$$\Pr\Big( \big\{ |Q_A(T)| \ge m/2 \big\} \cap \cE_\beta(I) \cap \big\{ G[I] \text{ is $K_s$-free} \big\} \Big) \le 6 \cdot e^{-m/12},$$
and since $m \ge s^{C/2}$, this implies that~\eqref{eq:ind-set-calc:aim} holds, as required. 
\end{proof}

\section{Every $k$-set contains few closed edges}\label{sec:open:edges}

In this section we will prove Lemma~\ref{lem:main-open-edges}, which bounds the number of closed edges with respect to $A^*$ in a $k$-set. The lemma turns out to be a straightforward consequence of the following similar statement in the graph $A$. Recall that $\beta = 2^{-7} s^{-2}$, and let us say that an event holds with \emph{very high probability} if it fails with probability at most $n^{-1}$.

\begin{lemma}\label{lem:A-open-edges} 
Let $k/2r \le t = O(s^3 k/r)$. With very high probability, we have 
$$\pig| \hspace{0.015cm} X_A(T) \pig| \le \frac{\beta}{8} {t \choose 2}$$
for every set $T \subset V(A)$ with $|T| = t$.
\end{lemma}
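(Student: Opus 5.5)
We would prove Lemma~\ref{lem:A-open-edges} by first establishing a few global regularity properties of $A$ that hold with very high probability, and then, for each fixed $t$-set $T \subset V(A)$, bounding the probability that $X_A(T)$ is large by some $\exp(-\omega(t \log n))$, so that a union bound over the $\binom{n/r}{t} \le e^{t \log n}$ choices of $T$ (and the $O(s^3 k/r)$ values of $t$) applies. Write $N = n/r$ and $L = \ell/r$ for the numbers of vertices of $A$ and of each $J \in \cJ_A$. By~\eqref{def:ell:m:r}, a vertex of $A$ lies in about $mL/N = m\ell/n = 2^{7} s^2 \log n$ copies. The global properties we would prove first, each by Chernoff and a union bound, are the following.
\begin{itemize}
\item Every vertex lies in at most $D = O(s^2 \log n)$ members of $\cJ_A$.
\item Any two members of $\cJ_A$ share at most $O(1)$ vertices; this holds since $L^2/N = \ell^2/(rn) \le n^{-1/8}$.
\item Every $J \in \cJ_A$ meets any fixed $t$-set in a bounded number of vertices, except with tiny probability, since the mean $Lt/N = O(s^5 \sqrt{\log n / n}) \cdot \sqrt{n}/r \ll 1$.
\end{itemize}

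Next we would decompose the closed pairs. A pair $xy \subset T$ is in $X_A(T)$ either because $x,y \in V(J_i)$ for some $i$ (this covers edges of $A$ and pairs in a common part, closed through another part), or because there is $z$ and $i \ne j$ with $x \in V(J_i)$, $y \in V(J_j)$ and $z \in V(J_i) \cap V(J_j)$. Hence
$$|X_A(T)| \le \sum_i \binom{|V(J_i) \cap T|}{2} + \#\big\{(i,j,x,y) : V(J_i)\cap V(J_j) \ne \emptyset,\ x \in V(J_i) \cap T,\ y \in V(J_j) \cap T\big\}.$$

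For the first sum, the variables $a_i = |V(J_i) \cap T|$ are independent, and $\Pr(a_i \ge a) \le (eLt/aN)^a$. For $T$ fixed we would bound the probability that $\sum_i a_i^2 \1[a_i \ge 2] \ge \beta t^2/16$ by summing over the possible "profiles", that is, over which copies have $a_i \ge 2$ and the values of those $a_i$. The expected contribution is $m(Lt/N)^2 \ll \beta t^2$. The tail is at most something like $\binom{m}{q}(Lt/N)^{\Omega(\beta t^2/q)}$, and each term gains a factor $n^{-\Omega(1)}$ per vertex of $T$ that is used, which we expect to beat $e^{t\log n}$ because $\beta t^2/\log n \gg t$.

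For the second term, we would condition on the set $\mathcal M$ of copies meeting $T$ and on their intersections with $T$. There are about $mLt/N \asymp s^2 t \log n$ such copies, and by the first step most of them meet $T$ in a single vertex. Conditionally, the remaining vertices of these copies are uniformly random. The second term is then essentially $\sum_z \binom{c_z}{2}$, summed over vertices $z$ of $A$, where $c_z$ is the number of copies in $\mathcal M$ through $z$ (weighted by $a_i a_j$, which is bounded). This is a balls-in-bins collision count with about $|\mathcal M| L$ balls in $N$ bins and each $c_z \le D$. Its mean is
$$(s^2 t \log n)^2 \frac{L^2}{N} \asymp t^2 \cdot s^4 (\log n)^2 \frac{\ell^2}{rn} \ll \beta t^2,$$
and the probability of $q$ collisions is at most about $(|\mathcal M|^2 L^2/N)^{q}/q!$, which decays like $n^{-\Omega(q)}$. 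Taking $q = \beta t^2/32$ gives the required $\exp(-\omega(t\log n))$ bound.

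We expect the main obstacle to be the second term. A crude bound such as $\sum_z d_T(z)^2 \le \max_z d_T(z) \cdot t\,\Delta(A)$ loses a logarithmic factor and fails. So the argument genuinely needs the conditional collision count over the copies that touch $T$, together with a careful treatment of the rare copies meeting $T$ in many vertices, whose contribution is quadratic. For these heavy copies we would first try the profile counting from the first sum, charging each heavy copy $a_i$ factors of $Lt/N$, before running the collision bound on the remaining light copies.
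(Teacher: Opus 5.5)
\paragraph{The gap.} The strategy itself fails, not only the deferred treatment of heavy copies. For a \emph{fixed} $t$-set $T$, the probability that $X_A(T)$ (or your upper bound for it) is large is not $\exp(-\omega(t\log n))$. It is not even below $\binom{n/r}{t}^{-1} = n^{-t/2+o(t)}$, so the first moment over $T$ diverges.

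To see this, note that $t$ exceeds $\ell/r$ only by a factor $\mathrm{poly}(s)\log n$, whereas $t r^2/\ell^2 = n^{-3/8+o(1)}$. Fix $T$ and consider the event that $y = hr/\ell$ members of $\cJ_A$ have their entire vertex set inside $T$. This event has probability about $m^{y}(tr/n)^{h} = n^{-h/2+o(h)}$. Conditional on it, these vertex sets are uniform $(\ell/r)$-subsets of $T$, so they pairwise intersect with probability $1-o(1)$, because $(\ell/r)^2 \gg t$. This gives $\sum_{ij\in E(H)} a_i a_j \ge (1-o(1))h^2/2$, and $|X_A(T)|$ itself is then typically of order $h^2$. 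Taking $h = \sqrt{\beta}\,t/2$, a single fixed $T$ fails with probability at least $n^{-\sqrt{\beta}\,t/4 - o(t)} \gg n^{-t/2}$.

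Neither of your two devices rescues this.
\begin{itemize}
\item Charging $(\ell t/n)^{a_i}$ per heavy copy gives a total cost of only $n^{-\Theta(h)}$, and $h$ here is a small multiple of $t$.
\item Collisions between the parts of heavy copies lying outside $T$ do not help either. Each such collision contributes $\Theta(\ell^2/r^2)$, so only about $\beta t^2 r^2/\ell^2 = o(t)$ of them are needed, at total cost $n^{-o(t)}$.
\end{itemize}
The bad sets cluster: one family of mutually intersecting copies makes every $T$ containing their union bad. So the expected number of bad $T$ is enormous, even though with high probability there are none.

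\paragraph{What is missing.} You need a \emph{global} sparsity property of the intersection graph $H$ on $[m]$, proved by a union bound over families of copies rather than over $T$. The paper shows that, with very high probability,
\begin{itemize}
\item $e_H(Y,Z) \le n^{-c}|Y||Z| + C^2 s^2 (|Y|+|Z|)\log n$ for all $Y,Z\subset[m]$ with $|Y|+|Z| = O(s^2 t\log n)$;
\item the sharper bound $e_H(Y,Z) \le C|Z|$ holds when $|Y| \le n^c$ and $|Z| \ge s^4|Y|(\log n)^3$. This uses the fact that small subfamilies of $\cA$ have few vertices of degree at least $8$.
\end{itemize}
The only statement proved by a union bound over $T$ is $\sum_{a_i \ge C} a_i \le Ct$. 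That is legitimate because it depends only on the traces $V(J)\cap T$, and its cost $n^{-Ct/16}$ beats $\binom{n/r}{t}$. The bound on $|X_A(T)|$ then follows deterministically: split the copies dyadically according to $a_i$ and apply these estimates to each $e_H(D_x,D_y)$.

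\paragraph{What works.} Your treatment of the first sum is fine. It is even deterministic given the degree bound, since $\sum_i \binom{a_i}{2} \le \frac{\ell}{2r}\sum_i a_i \le \frac{\ell}{2r}\cdot 2^{10}s^2 t\log n \ll \beta t^2$. Your conditional collision count for copies with bounded $a_i$ can plausibly be made to work once each $c_z$ is truncated at the maximum degree. It is the heavy copies that force the global argument.
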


In order to prove Lemma~\ref{lem:A-open-edges}, we will first prove several simple pseudorandom properties of the family $\cJ_A$, and then show that the claimed bound on $|X_A(T)|$ follows deterministically from these properties. Our first property of $\cJ_A$ is a bound on the maximum degree of the $\ell/r$-uniform hypergraph $\cA$ with vertex set $V(A)$ and edge set $\big\{ V(J) : J \in \cJ_A \big\}$. 

\begin{lemma}\label{obs:deg}
With very high probability, $\cA$ has maximum degree at most $2^{10} s^2 \log n$.
\end{lemma}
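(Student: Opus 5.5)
The degree of a fixed vertex $u \in V(A)$ in $\cA$ is a binomial random variable, so the plan is to compute its mean, apply a Chernoff bound, and take a union bound over the $n/r$ vertices of $A$.

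Fix $u \in V(A)$. Each $J \in \cJ_A$ is a uniformly random copy of $T_s(\ell/r)$ in the complete graph on $V(A)$, which has $n/r$ vertices. Its vertex set is therefore a uniformly random $(\ell/r)$-subset of $V(A)$. Hence $u \in V(J)$ with probability exactly $(\ell/r)/(n/r) = \ell/n$, independently for the $m$ choices of $J$. It follows that $d_{\cA}(u) \sim \mathrm{Bin}(m, \ell/n)$, with mean
$$\mu = \frac{m\ell}{n} = \frac{2^{44} s^3 \sqrt{n} (\log n)^{3/2}}{2^{37} s \, n} \sqrt{\frac{n}{\log n}} = 2^7 s^2 \log n,$$
by~\eqref{def:ell:m:r}.

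The threshold $2^{10} s^2 \log n$ equals $8\mu$. We use the standard Chernoff tail $\Pr(X \ge x) \le (e\mu/x)^x$, valid for $x \ge \mu$, or alternatively the simpler form $\Pr(X \ge x) \le 2^{-x}$ for $x \ge 2e\mu$. Either way,
$$\Pr\big( d_{\cA}(u) \ge 8\mu \big) \le \Big(\frac{e}{8}\Big)^{8\mu} \le e^{-8\mu} = n^{-2^{10} s^2},$$
where the middle step uses $\log(8/e) > 1$, so $(e/8)^{8\mu} \le e^{-8\mu}$.

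A union bound over the $n/r \le n$ vertices of $A$ then shows that the maximum degree exceeds $2^{10} s^2 \log n$ with probability at most $n \cdot n^{-2^{10}s^2} \le n^{-1}$, since $s \ge 2$. This is the required `very high probability' bound.

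There is no real obstacle here. The only points to check are that the inclusion events for a fixed vertex are independent across the $m$ copies, which holds because the copies $J \in \cJ_A$ are chosen independently, and that the constants in~\eqref{def:ell:m:r} make $\mu$ exactly $2^7 s^2 \log n$, so that the threshold is a constant multiple of the mean large enough for the Chernoff bound to beat the union bound.
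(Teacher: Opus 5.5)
Your proposal is correct and follows the same approach as the paper. The paper bounds $\Pr\big(d_\cA(u) \ge a\big) \le \binom{m}{a}(\ell/n)^a \le \big(e m\ell/(na)\big)^a$ with $m\ell/n = 2^7 s^2 \log n$, which is the same $(e\mu/x)^x$ tail estimate you use, and then takes the same union bound over the vertices of $A$.
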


\begin{proof}
Let $d_\cA(u) = |\{ J \in \cJ_A : u \in V(J) \}|$ be the degree of a vertex $u$ in $\cA$. We have 
$$\Pr\big( d_\cA(u) \ge a \big) \le \binom{m}{a} \bigg(\frac{\ell}{n}\bigg)^{a} \le \bigg(\frac{2^{7} e s^2 \log n}{a} \bigg)^a \le \frac{1}{n^2}$$
if $a \ge 2^{10} s^2 \log n$, since $m \ell = 2^7 s^2 n \log n$, by~\eqref{def:ell:m:r}.  
Taking a union bound over vertices $u \in V(A)$ now gives the claimed bound. 
\end{proof}

Our second property is that small sub-hypergraphs of $\cA$ have few high-degree vertices.  

\begin{lemma}\label{lem:few:vtxs:many:nbrs}
With very high probability, 
$$\big|\big\{ v \in V(A) : d_\cH(v) \ge 8 \big\}\big| \le e(\cH)$$
for every $\cH \subset \cA$ with $e(\cH) \le n^{1/4}$. 
\end{lemma}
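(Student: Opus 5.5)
We prove this by a first-moment computation: a union bound over all small collections of members of $\cJ_A$ together with all candidate sets of high-degree vertices. Suppose the conclusion fails for some $\cH \subset \cA$ with $h = e(\cH) \le n^{1/4}$. Then there is a set $W \subset V(A)$ with $|W| = h+1$ such that every $w \in W$ lies in at least $8$ edges of $\cH$. Let $N = n/r$ and $p = \ell/N = \ell r/n$. Each $J \in \cJ_A$ is a uniform copy of $T_s(\ell/r)$ on $V(A)$, so $V(J)$ is a uniform $(\ell/r)$-subset of $V(A)$. For a fixed vertex set $W$ of size $w$, the probability that $V(J) \supset W'$ for a fixed $W' \subset W$ of size $j$ is at most $(\ell/n)^j$, where $\ell/n = (\ell/r)/N$. (We write $q = \ell/n$, matching the computation in Lemma~\ref{obs:deg}.)

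For the union bound, we fix $h \le n^{1/4}$, a set $W$ of size $w = h+1$, $h$ indices $i_1,\dots,i_h \in [m]$, and, for each index, the set $W_j = W \cap V(J_{i_j})$. The requirement that each vertex of $W$ has degree at least $8$ gives $\sum_j |W_j| \ge 8w$. By independence of the $J_i$, the probability of a given configuration is at most $\prod_j q^{|W_j|} \le q^{8w}$. The number of configurations is at most
\[
N^{w} \cdot m^{h} \cdot 2^{wh}.
\]
The factor $2^{wh}$ is too large, so instead we count incidences. We choose, for each $w \in W$, eight indices in $[m]$, which gives at most $m^{8w}$ choices. We then require that for each chosen index the corresponding $J$ contains all the vertices of $W$ assigned to it, so the probability is at most $q^{8w}$. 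Since the assignment only uses $h$ distinct indices, we could even save more, but we do not need to. The expected number of bad configurations is therefore at most
\[
\sum_{w} N^{w} (mq)^{8w} = \sum_w \big( N (m\ell/n)^8 \big)^{w}.
\]
Here $m\ell/n = 2^7 s^2 \log n / \sqrt{n}\cdot\ldots$. We must compute this correctly: $m\ell = 2^7 s^2 n\log n$ by~\eqref{def:ell:m:r}, so $m\ell/n = 2^7 s^2\log n$, which is not small.

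This exposes the real obstacle: using only the eight incidences per vertex ignores that just $h$ distinct hyperedges are available, and that constraint is what carries the force of the lemma. The correct counting is therefore as follows. We choose the $h \le w-1$ distinct indices, at most $m^h$ ways. For each $w \in W$ we choose $8$ of these $h$ indices, at most $h^{8w}$ ways. The probability is at most $q^{8w}$, each (vertex, edge) incidence contributing a factor $q$ by negative correlation within a single uniform set and independence across sets. The expectation is then at most
\[
N^{w} m^{h} (hq)^{8w} \le \big( N m\, (h\ell/n)^8 \big)^{w}.
\]
With $h \le n^{1/4}$ and $\ell \le \sqrt{n}$, we have $h\ell/n \le n^{-1/4}$, so $(h\ell/n)^8 \le n^{-2}$, while $Nm \le n^{1 - 1/8}\cdot n^{1/2+o(1)} = n^{11/8+o(1)}$. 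Each factor is thus at most $n^{-5/8+o(1)}$, and summing over $w \ge 1$ and $h \le n^{1/4}$ gives total failure probability $n^{-5/8+1/4+o(1)}$. If this is not below $n^{-1}$, we tighten in two ways: we use $h \le w-1$ together with the threshold $8$ more carefully (for instance bounding $N^w m^h \le (Nm)^w$ but noting that $h\ell/n$ is much smaller when $h$ is small), or we use that the bound is per $w$ and sum a geometric series in $w$ starting from $w \ge 2$. The main step to verify carefully is this exponent bookkeeping: the degree threshold $8$ is chosen precisely so that $q^8 h^8$ beats $Nm$ with room to spare.
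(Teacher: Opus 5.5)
Your final count is exactly the paper's first-moment argument, and it is correct: choose the $h$ edges of $\cH$ ($m^h$ ways), the $h+1$ high-degree vertices ($N^{h+1}$ ways), and eight of the $h$ edges for each such vertex ($h^{8(h+1)}$ ways), then pay $(\ell/n)^{8(h+1)}$ using the hypergeometric bound and independence across the $J_i$. The only cleanup is the final tally: $w = h+1$ is determined by $h$, and $h \ge 8$ is forced (otherwise no vertex has degree $8$), so the failure probability is at most $\sum_{h \ge 8} \big(n^{-5/8}\big)^{h+1} = O(n^{-45/8})$; the figure $n^{-5/8+1/4+o(1)}$ you wrote is a miscount, and the hedged ``tightenings'' and the abandoned earlier counts can simply be dropped.
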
 

\begin{proof}
The probability that there exists $\cH \subset \cA$ with $d \ge e(\cH)$ vertices of degree at least $8$ and at most $n^{1/4}$ edges is at most
$${m \choose e(\cH)} {n/r \choose d} {e(\cH) \choose 8}^d \bigg( \frac{\ell}{n} \bigg)^{8d} \le \bigg( m^{1/8} \cdot n^{1/8} \cdot e(\cH) \cdot \frac{\ell}{n} \bigg)^{8d} \le n^{-d/2},$$
where the first step holds since $e(\cH) \le d$, and in the second we used~\eqref{def:ell:m:r} and the bound $e(\cH) \le n^{1/4}$. Taking a union bound over the choices of $d$ and $e(\cH)$, the lemma follows. 
\end{proof}

Now, define $H$ to be the random graph with vertex set $[m]$ 
and edge set
$$ij \in E(H) \qquad \Leftrightarrow \qquad V(J_i^A) \cap V(J_j^A) \ne \emptyset.$$
To motivate this definition, observe that if
$$\pig| V(J^A_i) \cap T \pig| = a_i$$
for each $i \in [m]$, then
\begin{equation}\label{eq:XAT:basic:bound} 
\pig| \hspace{0.015cm} X_A(T) \pig| \le \hspace{0.02cm} \sum_{i \eqa 1}^m {a_i \choose 2} + \sum_{ij \ina E(H)} a_i a_j.
\end{equation}

Our next two properties say that $H$ is sparse on large sets. Given 
sets $Y,Z \subset V(H)$, let us write $e_H(Y,Z)$ for the number of edges of $H$ with one endpoint in $Y$ and the other in $Z$. We will find it convenient in the proofs below to set $C = 2^{10}$ and $c = 2^{-6}$. 

\begin{lemma}\label{lem:intersection:sparse}
With very high probability, we have
\begin{equation}\label{eq:intersection}
e_H(Y,Z) \le n^{-c} \gap |Y| |Z| + C^2 s^2 \big( |Y| + |Z| \big) \log n 
\end{equation}
for all sets $Y,Z \subset V(H)$ with $|Y| + |Z| = O\pig( s^2 t \log n \pig)$. 
\end{lemma}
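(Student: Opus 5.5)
The plan is a first-moment/Chernoff bound for fixed $Y,Z$, followed by a union bound over all pairs of sets. By splitting $Y \cup Z$ into $Y \setminus Z$, $Z \setminus Y$ and $Y \cap Z$ (at the cost of a constant factor), it suffices to treat disjoint $Y,Z$, with the case $Y = Z$ handled by a random bipartition. Write $y = |Y|$, $z = |Z|$ and $U_Y = \bigcup_{i \in Y} V(J_i^A)$, so that $|U_Y| \le y\ell/r$.

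\textbf{Conditioning.} First condition on the sets $V(J_i^A)$ for $i \in Y$; by Lemma~\ref{obs:deg} we may assume that every vertex lies in at most $D = 2^{10}s^2\log n$ of them. For $j \in Z$ let $X_j = |\{ i \in Y : V(J_i^A) \cap V(J_j^A) \ne \emptyset\}|$. The sets $V(J_j^A)$, $j \in Z$, are still independent and uniform, so $e_H(Y,Z) = \sum_{j \in Z} X_j$ is a sum of independent variables. The mean is
$$\Ex\big[e_H(Y,Z)\big] \le yz\,\frac{(\ell/r)^2}{n/r} = yz\,\frac{\ell^2}{rn} \le yz\, n^{-1/8} \ll n^{-c} yz,$$
using~\eqref{def:ell:m:r}. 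The whole difficulty is the upper tail. It must be strong enough to beat the $\binom{m}{y}\binom{m}{z} \le e^{(y+z)\log m}$ choices of $(Y,Z)$, while the linear error term may be only $C^2s^2(y+z)\log n$, not $D(y+z)\log n \sim s^2(y+z)\log^2 n$.

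\textbf{Taming large $X_j$.} The naive bound $X_j \le D\,|V(J_j^A)\cap U_Y|$, combined with a Chernoff bound for the hypergeometric intersections, loses exactly one factor of $\log n$. To avoid this, I would separate $U_Y$ into low vertices, lying in fewer than $8$ of the sets $V(J_i^A)$ with $i\in Y$, and high vertices. Then
$$X_j \le 7\,\big|V(J_j^A)\cap U_Y^{\rm low}\big| + D\,\big|V(J_j^A)\cap U_Y^{\rm high}\big|.$$

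\emph{Low part.} Each summand $|V(J_j^A)\cap U_Y^{\rm low}|$ is hypergeometric with mean at most $y\ell^2/(rn)$. Their sum over $j\in Z$ is at most $2\,yz\ell^2/(rn) + K(y+z)\log n$ except with probability $e^{-K(y+z)\log n}$. Choosing $K$ a large absolute constant (times $s^2$ if needed) beats the union bound, so this part contributes within the allowed error.

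\emph{High part.} Lemma~\ref{lem:few:vtxs:many:nbrs} says that any subfamily with at most $n^{1/4}$ members has at most as many high vertices as members. Since $y$ may exceed $n^{1/4}$, I would apply it to each block of a partition of $Y$ into blocks of size $n^{1/4}$. I expect to need to restate it for the degree-$8$ threshold within blocks; alternatively one can rerun its proof directly, counting vertices of $Y$-degree at least $8$, to get that $|U_Y^{\rm high}| \le y$ with very high probability. Then $|V(J_j^A)\cap U_Y^{\rm high}|$ has mean at most $y\ell/n \le yn^{-1/2}$, which is tiny. A vertex-by-vertex tail bound then shows that $\sum_j |V(J_j^A)\cap U_Y^{\rm high}| = O((y+z)/\log n)$ with probability at least $1 - e^{-3(y+z)\log n}$, because each unit costs a factor $n^{-1/2}$ or so in probability. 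Multiplying by $D$ therefore contributes only $O(s^2(y+z))$.

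Adding the two parts gives~\eqref{eq:intersection} with room to spare in the $yz$ term. A union bound over $y,z$ and over $(Y,Z)$ then gives failure probability $o(n^{-1})$.

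\textbf{Main obstacle.} The delicate step is the high-degree part. The naive degree bound from Lemma~\ref{obs:deg} loses exactly the $\log n$ factor that the statement cannot afford. Making the ``few vertices of degree $\ge 8$'' input work for families as large as $|Y| = O(s^2t\log n)$, where $t \approx n^{3/8}$ up to logarithms, rather than only $n^{1/4}$, will require either the blocking argument or re-proving Lemma~\ref{lem:few:vtxs:many:nbrs} in the needed range. That is where I would spend the effort.
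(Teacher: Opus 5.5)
Your route differs genuinely from the paper's and can be made to work. However, two steps fail as written.

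\textbf{The high-part tail bound is off by a factor of $\log n$.} Conditionally on the sets indexed by $Y$, and knowing only $|U_Y^{\mathrm{high}}|\le y$, the sum $W=\sum_{j\in Z}|V(J_j^A)\cap U_Y^{\mathrm{high}}|$ satisfies $\Pr(W\ge\lambda)\le(e\,yz\ell/(n\lambda))^{\lambda}$. With $\lambda=(y+z)/\log n$ this gives only a bound of the form $e^{-O(y+z)}$. Even if each unit cost $n^{-1/2}$, $(y+z)/\log n$ units would give only $e^{-(y+z)/2}$, which cannot beat the $\binom{m}{y}\binom{m}{z}$ union bound. In fact each unit costs about $\min(y,z)\ell\log n/n$, which is $n^{-1/8+o(1)}$ at the top of the range. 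You must take $\lambda=K(y+z)$ for a suitable absolute constant $K$; then $\Pr(W\ge\lambda)\le n^{-K(y+z)/9}$, since $\min(y,z)\ell/n\le n^{-1/9}$. The high part then contributes $DK(y+z)=2^{10}Ks^2(y+z)\log n$. This is still admissible because $C^2=2^{20}$, but it is of the same order as the error term. So there is no room to spare there, and the constants have to be tracked.

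\textbf{The bound $|U_Y^{\mathrm{high}}|\le y$ does not follow from Lemma~\ref{lem:few:vtxs:many:nbrs}.} You need it for all $Y$ of size up to $O(s^8n^{3/8}(\log n)^{3/2})$. Blocking does not help, since a vertex of $Y$-degree at least $8$ can have degree at most $7$ in every block. Re-running that proof verbatim also fails, because the base $m^{1/8}n^{1/8}e(\cH)\ell/n$ exceeds $1$ once $e(\cH)\gg n^{5/16}$. What works is a sharper first moment. The expected number of $h$-sets $Y\subset[m]$ with at least $h$ vertices of $Y$-degree at least $8$ is at most
$$\binom{m}{h}\binom{n/r}{h}\binom{h}{8}^{h}\Big(\frac{\ell}{n}\Big)^{8h}\le\Big(e^{2}\,m\,\frac{n}{r}\,h^{6}\Big(\frac{\ell}{n}\Big)^{8}\Big)^{h}.$$
Here $m(n/r)h^{6}(\ell/n)^{8}=O\big(s^{43}n^{-3/8}(\log n)^{13/2}\big)\le n^{-1/4}$ in the relevant range, and $h\ge 8$ is forced. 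With these two repairs the argument goes through; your reduction to disjoint $Y,Z$ is fine.

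\textbf{Comparison with the paper.} The paper never conditions on $Y$ and never uses Chernoff. Let $v_1,\dots,v_d$ be the vertices lying in some $V(J_y^A)\cap V(J_z^A)$, with $a_i,b_i$ their $Y$- and $Z$-degrees and $\sum a_i\le\sum b_i$. Then $e_H(Y,Z)\le\sum_i a_ib_i\le\Delta(\cA)\sum_i a_i$. So failure of \eqref{eq:intersection} forces $\sum_i a_i\ge C(|Y|+|Z|)$ and $\sum_i a_i\ge n^{-2c}|Y||Z|$. A single first-moment count over these witness configurations, split by whether $d\le c\sum_i b_i$, then finishes. The forced surplus of incidences, each costing $\ell/n$, pays both for the choice of $(Y,Z)$ and for the $\log n$ lost in $\Delta(\cA)$. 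Hence no high-degree-vertex input is needed, and overlapping $Y,Z$ need no separate treatment.

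Your conditioning makes the independence explicit and gives a better quadratic term ($yz\ell^2/(rn)$ instead of $n^{-c}yz$). The price is an extra lemma about high-degree vertices in large families, which is essentially the mechanism the paper reserves for Lemma~\ref{lem:intersection:asym}.
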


\begin{proof}
Let $\{v_1,\ldots,v_d\}$ be the set of vertices that are contained in $V(J_y^A) \cap V(J_z^A)$ for some $y \in Y$ and $z \in Z$ with $y \ne z$. For each $i \in [d]$, let
\begin{equation}\label{def:ai:bi}
\big|\big\{ y \in Y  : v_i \in V(J_y^A) \big\} \big| = a_i \qquad \text{and} \qquad \big|\big\{ z \in Z : v_i \in V(J_z^A) \big\} \big| = b_i,
\end{equation}
and suppose without loss of generality that $\sum_i a_i \le \sum_i b_i$. Note that
$$e_H(Y,Z) \le \sum_{i \eqa 1}^d a_i b_i \le \Delta(\cA) \sum_{i \eqa 1}^d a_i,$$
and recall from Lemma~\ref{obs:deg} that we may assume that $\Delta(\cA) \le C s^2 \log n$. It follows that if~\eqref{eq:intersection} fails to hold for the pair $(Y,Z)$, then
$$|Y| + |Z| \le c \sum_{i \eqa 1}^d a_i \qquad \text{and} \qquad |Y| |Z| \le n^{2c} \sum_{i \eqa 1}^d a_i.$$
Now, there are at most ${m \choose |Y|} {m \choose |Z|}$ choices for $Y$ and $Z$, and ${n/r \choose d}$ choices for the vertices $v_i$, given $d$. Given the sequences $\va$ and $\vb$, there are then at most $\prod_i {|Y| \choose a_i} {|Z| \choose b_i}$ choices for the sets that contain $v_1,\ldots,v_d$, and each event $v_i \in J$ occurs with probability\footnote{More precisely, this holds conditional on any subset of the other events occurring.} at most $\ell/n$. 

The expected number of bad pairs $(Y,Z)$ for a given choice of $(|Y|,|Z|,d,\va,\vb)$ is thus at most
\begin{equation}\label{eq:intersection:count}
{m \choose |Y|} {m \choose |Z|} {n/r \choose d} \prod_{i = 1}^d {|Y| \choose a_i} {|Z| \choose b_i} \bigg( \frac{\ell}{n} \bigg)^{a_i + b_i}. 
\end{equation}
To bound~\eqref{eq:intersection:count}, observe first that if $d \le c \sum_i b_i$, then~\eqref{eq:intersection:count} is at most
$$\bigg( m^c \cdot |Y| \cdot \frac{\ell}{n} \bigg)^{\sum_i a_i} \bigg( m^c \cdot n^c \cdot |Z| \cdot \frac{\ell}{n} \bigg)^{\sum_i b_i} \le n^{-c \sum_i (a_i + b_i)} \le n^{-d},$$
since $|Y| + |Z| \le c \sum_i a_i \le c \sum_i b_i$ and $|Y| + |Z| = O\pig( s^2 t \log n \pig) \le n^{2/5}$, since $t = O(s^3k/r)$. 

On the other hand, if $d \ge c \sum_i b_i$, then we have
$${n/r \choose d} \le \bigg( \frac{e n^{1+3c}}{r|Y|^2} \bigg)^{d/2} \bigg( \frac{e n^{1+3c}}{r|Z|^2} \bigg)^{d/2} \le \bigg( \frac{n^{1/2+2c}}{\sqrt{r} |Y|} \bigg)^{\sum_i a_i} \bigg( \frac{n^{1/2+2c}}{\sqrt{r} |Z|} \bigg)^{\sum_i b_i},$$
where in the first step we used $d \ge c \sum_i b_i \ge n^{-3c} \gap |Y||Z|$, and in the second we used the bounds $d \le \sum_i a_i \le \sum_i b_i$ and $r \pig( |Y|^2 + |Z|^2 \pig) \le n$. It follows that~\eqref{eq:intersection:count} is at most
$$m^{|Y|+|Z|} \bigg( \frac{n^{1/2+2c}}{\sqrt{r}} \cdot \frac{\ell}{n} \bigg)^{\sum_i (a_i + b_i)} \le n^{-c \sum_i (a_i + b_i)} \le n^{-cd},$$
since $|Y| + |Z| \le c \sum_{i \eqa 1}^d a_i \le c \sum_i b_i$, and recalling that $\ell \le \sqrt{n}$, $r = n^{1/8}$ and $c = 2^{-6}$. 

Finally, we take a union bound over the choices of $(|Y|,|Z|,d,\va,\vb)$. Noting that there are at most $\Delta(\cA)^{2d}$ choices for the sequences $\va$ and $\vb$, and recalling from Lemma~\ref{obs:deg} that $\Delta(\cA) = O( s^2 \log n)$ with very high probability,  the lemma follows.
\end{proof}

When $Y$ and $Z$ have very different sizes, we will need the following stronger bound. 

\begin{lemma}\label{lem:intersection:asym}
With very high probability, we have
\begin{equation}\label{eq:intersection:asym}
e_H(Y,Z) \le C |Z|
\end{equation}
for every set $Y,Z \subset V(H)$ with $|Y| \le n^c$ and $s^4 |Y| (\log n)^3 \le |Z| \le O\pig( s^2 t \log n \pig)$. 
\end{lemma}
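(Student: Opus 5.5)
The plan is to analyse $e_H(Y,Z)$ one $z \in Z$ at a time, with $Y$ fixed. For each $z$ let $N_Y(z) = \{ y \in Y : V(J^A_y) \cap V(J^A_z) \ne \emptyset \}$, so that $e_H(Y,Z) \le \sum_{z \in Z} |N_Y(z)|$. Write $V_Y = \bigcup_{y \in Y} V(J^A_y)$, a set of at most $|Y|\ell/r$ vertices of $A$. By Lemma~\ref{obs:deg} every vertex lies in at most $\Delta(\cA) \le C s^2 \log n$ of the sets, so $|N_Y(z)| \le \Delta(\cA) \cdot |V(J^A_z) \cap V_Y|$.

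The pairs with $z \in Y \cap Z$ are handled separately by Lemma~\ref{lem:intersection:sparse} applied to $(Y, Y \cap Z)$. This gives at most $n^{-c}|Y|^2 + 2C^2 s^2 |Y| \log n$ edges. Since $|Y| \le n^c$ and $|Z| \ge s^4 |Y| (\log n)^3$, this is at most $|Z|$. So we may assume $z \notin Y$. Then, conditional on $\{J^A_y : y \in Y\}$, the sets $J^A_z$ are independent uniform copies of $J$, and each vertex of $V_Y$ lies in $J^A_z$ with probability $\ell/n$.

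The main step is to split the sum as
$$\sum_{z \in Z} |N_Y(z)| \le |Z| + \sum_{z \in [m] \setminus Y} \big( |N_Y(z)| - 1 \big)^+,$$
and to bound the second term, the excess, uniformly over all $z \in [m]$. This is essential, because we cannot afford a union bound over the $\binom{m}{|Z|}$ choices of $Z$. Indeed, $Z$ could consist entirely of sets meeting $V_Y$, so the estimate must come from showing that very few $z$ in the whole family contribute more than one edge. A set $z$ has $|N_Y(z)| \ge 2$ only if one of two things happens:
\begin{itemize}
\item[(a)] $J^A_z$ meets $V_Y$ in at least two vertices, or
\item[(b)] $J^A_z$ meets $V_Y$ in a vertex lying in at least two of the $J^A_y$, $y \in Y$.
\end{itemize}
Lemma~\ref{lem:few:vtxs:many:nbrs} says $Y$ has few vertices of high degree, so the set $V_Y^{(2)}$ of vertices covered at least twice by $Y$ should have size $O(|Y|)$, up to the degree-$8$ threshold. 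I would first try to verify this bound on $|V_Y^{(2)}|$ directly with a union bound over $Y$.

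Next, count incidences. The expected number of pairs $(z,v)$ with $v \in V_Y^{(2)} \cap V(J^A_z)$ is about $m|Y|\ell/n = O(s^2 |Y| \log n)$. The expected number of $z$ with $|V(J^A_z) \cap V_Y| \ge 2$ is at most $m\big(|Y|\ell^2/(rn)\big)^2 \ell^{-0}$, which is tiny compared with $|Y|$ since $\ell^2 \ll rn$. Using the multiplicity tail $\binom{m}{a}(\ell/n)^a$ as in Lemma~\ref{obs:deg}, each of these counts exceeds $s^2 |Y| (\log n)^2$ with probability at most $n^{-\omega(|Y|)}$. This beats the union bound over the $m^{|Y|}$ choices of $Y$, which is affordable precisely because $|Y| \le n^c$ and no union bound over $Z$ is taken. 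Each bad $z$ then contributes at most $\Delta(\cA)^2 = O(s^4 (\log n)^2)$ to the excess, so
$$\text{excess} = O\big(s^4 |Y| (\log n)^3\big) \le |Z|$$
by the hypothesis on $|Z|$. Summing the three contributions gives $e_H(Y,Z) \le 3|Z| \le C|Z|$.

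The main obstacle is the exact bookkeeping in the excess bound. One must check that the powers of $s$ and $\log n$ coming from $\Delta(\cA)$ and from the tail estimates fit inside the assumed gap $|Z| \ge s^4 |Y| (\log n)^3$, and that the tail probabilities beat $m^{|Y|}$ simultaneously for all $|Y| \le n^c$. If the direct count of doubly-covered vertices is too weak, I would instead bound the excess by $\sum_{v \in V_Y} a_v b_v$, where $a_v$ and $b_v$ are as in~\eqref{def:ai:bi}, and reuse the counting~\eqref{eq:intersection:count} with the $Y$-side factor $\binom{m}{|Y|}$ only.
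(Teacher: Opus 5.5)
There is a genuine gap, in exactly the step you identify as the main one. The excess $\sum_{z \in [m]\setminus Y}(|N_Y(z)|-1)^+$, taken over the whole family, is not small.

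In case (a) your estimate loses track of the size of $m$. By~\eqref{def:ell:m:r} we have $\ell^2/(rn) = 2^{-74}s^{-2}n^{-1/8}/\log n$, so the expected count you write down is
$$m\bigg(\frac{|Y|\ell^2}{rn}\bigg)^2 \approx 2^{-104}s^{-1}|Y|^2\,\frac{n^{1/4}}{\sqrt{\log n}}.$$
This is polynomially large in $n$, not tiny compared with $|Y|$. The bound $\ell^2 \ll rn$ only gives $(\ell^2/(rn))^2 \approx n^{-1/4}$, whereas $m \approx \sqrt{n}$.

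Nor is this an artefact of the crude condition (a). Every $z \notin Y$ for which $J^A_z$ meets two distinct sets $J^A_y, J^A_{y'}$ contributes at least $1$ to the excess. For a fixed pair $y \ne y'$, conditional on $J^A_y, J^A_{y'}$, the number of such $z$ is a sum of independent indicators with mean of order $s^{-1}n^{1/4}/\sqrt{\log n}$, hence concentrated. So already for $|Y| = 2$ the excess is far larger than the guaranteed lower bound $|Z| \ge 2s^4(\log n)^3$. For such $Z$ the inequality $e_H(Y,Z) \le |Z| + \text{excess}$ says nothing.

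In particular, your tail claim for count (a) is false. The fallback of bounding the excess by $\sum_v a_vb_v$ with only the $Y$-side union bound cannot rescue it, since the quantity being bounded really is this large.

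The missing idea is that you \emph{can} afford the union bound over $Z$, provided you first show that a violating pair has many incidences on the $Z$-side; this is what the paper does. With $v_i, a_i, b_i$ as in~\eqref{def:ai:bi}, Lemma~\ref{lem:few:vtxs:many:nbrs} gives $a_i \le 7$ for all but at most $|Y|$ indices. Hence
$$e_H(Y,Z) \le \sum_i a_i b_i \le 7\sum_i b_i + |Y|\,\Delta(\cA)^2 \le 7\sum_i b_i + |Z|,$$
using $|Z| \ge s^4|Y|(\log n)^3$. So if~\eqref{eq:intersection:asym} fails, then $|Z| \le c\sum_i b_i$. Similarly $|Y| \le c\sum_i a_i$, from $|Z| \le e_H(Y,Z) \le \Delta(\cA)\sum_i a_i$.

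The factor $(\ell/n)^{\sum_i (a_i + b_i)}$ in~\eqref{eq:intersection:asym:count} then beats $\binom{m}{|Y|}\binom{m}{|Z|}\binom{n/r}{d}$, after splitting according to whether $d \le c\sum_i b_i$. It is the threshold $7$ on the $Y$-degrees, rather than your threshold $1$ on $|N_Y(z)|$, that makes the arithmetic work.

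If you want to keep a fixed-$Y$ argument, you would similarly have to raise the threshold. You would count only those $z$ for which $J^A_z$ meets at least $q$ distinct $J^A_y$ with $q \ge 5$; the expected number of these is roughly $m(|Y|\ell^2/(rn))^q \le n^{1/2 - 7q/64 + o(1)}$. You would then still need to control their total contribution uniformly in $Y$, and nothing in your proposal does this.
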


\begin{proof}
Define the set $\{v_1,\ldots,v_d\}$ and the sequences $\va$ and $\vb$ as in~\eqref{def:ai:bi}, and observe that, by~Lemmas~\ref{obs:deg} and~\ref{lem:few:vtxs:many:nbrs}, with high probability we have $\max\{ a_i, b_i \} \le \Delta(\cA) \le C s^2 \log n $, and that at most $|Y|$ of the $a_i$ satisfy $a_i \ge 8$. It follows that, with high probability, we have
$$C |Z| \le e_H(Y,Z) \le \sum_{i \eqa 1}^d a_i b_i \le 7 \sum_{i \eqa 1}^d b_i  + O\pig( s^4 |Y| (\log n)^2 \pig) \le 7 \sum_{i \eqa 1}^d b_i + |Z|$$
for every pair $(Y,Z)$ with $|Z| \ge s^4 |Y| (\log n)^3$ such that~\eqref{eq:intersection:asym} fails to hold, and also
$$s^4 |Y| (\log n)^3 \le |Z| \le e_H(Y,Z) \le \sum_{i \eqa 1}^d a_ib_i \le C s^2 \log n \sum_{i \eqa 1}^d a_i.$$
In particular, for each such pair $(Y,Z)$, we have 
$$|Y| \le c \sum_{i \eqa 1}^d a_i \qquad \text{and} \qquad |Z| \le c \sum_{i \eqa 1}^d b_i.$$ 
Moreover, recall from~\eqref{eq:intersection:count} that the expected number of bad pairs $(Y,Z)$ for a given choice of $(|Y|,|Z|,d,\va,\vb)$ is at most
\begin{equation}\label{eq:intersection:asym:count}
{m \choose |Y|} {m \choose |Z|} {n/r \choose d} \prod_{i = 1}^d {|Y| \choose a_i} {|Z| \choose b_i} \bigg( \frac{\ell}{n} \bigg)^{a_i + b_i}. 
\end{equation}
To bound~\eqref{eq:intersection:asym:count}, suppose first that $d \le c \sum_i b_i$, and observe that
$${m \choose |Y|} \prod_{i = 1}^d {|Y| \choose a_i} \bigg( \frac{\ell}{n} \bigg)^{a_i} \le \bigg( n^c \cdot |Y|  \cdot \frac{\ell}{n} \bigg)^{\sum_i a_i} \le n^{-c \sum_i a_i},$$
since $|Y| \le c \sum_i a_i$ and $|Y| \le n^c$, and that
$${m \choose |Z|} {n/r \choose d} \prod_{i = 1}^d {|Z| \choose b_i} \bigg( \frac{\ell}{n} \bigg)^{b_i} \le \bigg( n^{2c} \cdot |Z| \cdot \frac{\ell}{n} \bigg)^{\sum_i b_i} \le n^{-c \sum_i b_i},$$
since $|Z| + d \le 2c \sum_i b_i$ and $|Z| = O\pig( s^2 t \log n \pig) = O\pig( s^5 (k/r) \log n \pig) \le n^{2/5}$. 

On the other hand, if $d \ge c \sum_i b_i$, then we have
$${n/r \choose d} \le \bigg( \frac{en}{r} \bigg)^{d/2} \bigg( \frac{en}{r|Z|^2} \bigg)^{d/2} \le \bigg( \sqrt{ \frac{en}{r} } \bigg)^{\sum_i a_i} \bigg( \sqrt{\frac{en}{r|Z|^2}} \bigg)^{\sum_i b_i},$$
where the first step follows from $d \ge c \sum_i b_i \ge |Z|$, and in the second we used the bounds $d \le \sum_i a_i$, $d \le \sum_i b_i$, and $r |Z|^2 = O\pig( s^8 (k^2/r) (\log n)^2 \pig) \le n$. It follows that~\eqref{eq:intersection:asym:count} is at most
$$m^{|Y| + |Z|} \bigg( \sqrt{ \frac{en}{r} } \cdot |Y| \cdot \frac{\ell}{n} \bigg)^{\sum_i a_i} \bigg( \sqrt{ \frac{en}{r|Z|^2} } \cdot |Z| \cdot \frac{\ell}{n} \bigg)^{\sum_i b_i} \le n^{-c \sum_i (a_i + b_i)}$$
since $|Y| \le c \sum_i a_i$, $|Z| \le c \sum_i b_i$, $r = n^{1/8}$ and $|Y| \le n^c$. 

Finally, we again take a union bound over the choices of $(|Y|,|Z|,d,\va,\vb)$. Since there are at most $\Delta(\cA)^{2d}$ choices for the sequences $\va$ and $\vb$, and $\Delta(\cA) = O( s^2 \log n)$ with high probability, the expected number of bad pairs $(Y,Z)$ is $o(1)$, as required.  
\end{proof}

The final property we need is that the sum of large intersections with $T$ isn't too large.

\begin{lemma}\label{lem:few:big:intersections}
With very high probability, 
\begin{equation}\label{eq:few:big:intersections}
\sum_{J \ina \cJ_A} |V(J) \cap T| \cdot \1\big[ |V(J) \cap T| \ge C \big] \le Ct
\end{equation}
for every set $T \subset V(A)$ with $|T| = t$.
\end{lemma}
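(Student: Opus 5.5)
The plan is a direct first-moment argument: a union bound over $T$, over the family of members of $\cJ_A$ that meet $T$ in at least $C$ vertices, and over the sizes of these intersections. The key point is that a single $J \in \cJ_A$ is very unlikely to meet a fixed $t$-set in many vertices, since $t\ell/n$ is polynomially small in $n$. We will show that the resulting gain beats the entropy of choosing $T$ and the family.

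First I record the one-copy estimate. Each $J \in \cJ_A$ is a uniformly random copy of $T_s(\ell/r)$ in a vertex set of size $n/r$, so its vertex set is a uniform $(\ell/r)$-subset of $V(A)$. Each vertex therefore lies in $V(J)$ with probability $\ell/n$, and these events are negatively correlated. Hence, for a fixed $T$ with $|T| = t$ and any $a \ge 1$,
$$\Pr\big( |V(J) \cap T| \ge a \big) \le \binom{t}{a} \bigg( \frac{\ell}{n} \bigg)^a \le \bigg( \frac{e t \ell}{a n} \bigg)^a.$$
Using~\eqref{def:ell:m:r},~\eqref{def:k} and $t = O(s^3 k/r)$, we get $t\ell/n = O(s^5/r) \le n^{-1/9}$ for $n \ge s^C$. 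Moreover the copies $J_1^A,\dots,J_m^A$ are independent.

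Next I reduce to a minimal bad configuration. Suppose~\eqref{eq:few:big:intersections} fails for some $T$. Then there is a set $Y \subset [m]$ of indices and integers $a_i \ge C$ ($i \in Y$) such that $|V(J_i^A) \cap T| \ge a_i$ for each $i \in Y$ and $\sum_{i \in Y} a_i = M$ for some $M \ge Ct$. To arrange this, take indices with large intersection one at a time, and truncate the last $a_i$ so that the total lies in $[Ct, Ct + \Delta(\cA)]$; alternatively just sum over all $M \ge Ct$. Since each $a_i \ge C$, we have $|Y| \le M/C$.

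The expected number of such configurations is at most
$$\sum_{M \ge Ct} \binom{n/r}{t} \sum_{y \le M/C} \binom{m}{y} \sum_{\va} \prod_{i \in Y} \bigg( \frac{e t \ell}{a_i n} \bigg)^{a_i} \le \sum_{M \ge Ct} n^{t} \cdot m^{M/C} \cdot 2^{2M} \cdot n^{-M/9}.$$
Here $2^{2M}$ bounds the number of choices of $y$ and of compositions $\va$ of $M$. Since $t \le M/C$, $m \le n$ and $C = 2^{10}$, the summand is at most $n^{M(2/C - 1/9) + o(M)} \le n^{-M/10}$. Summing over $M \ge Ct \ge C k/2r$ gives $o(n^{-1})$, which establishes the lemma with very high probability.

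The only step that needs care is the entropy bookkeeping. The factor $\binom{n/r}{t}$ and the factor $\binom{m}{y}$ must both be paid for by the gain $(t\ell/n)^{M}$. This works precisely because we sum only over intersections of size at least $C$: that is what gives $y \le M/C$ and $t \le M/C$, so both entropy factors are at most $n^{O(M/C)}$, which is negligible against $n^{-M/9}$.
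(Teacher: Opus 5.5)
Your proof is correct and is essentially the paper's argument: a single-copy tail bound $\Pr(|V(J)\cap T|\ge a) \le n^{-\Omega(a)}$, independence of the copies in $\cJ_A$, and a union bound over $T$, over the copies with large intersection and over the intersection sizes, where the entropy is paid for because each $a_i \ge C$. The only difference is cosmetic: the paper truncates to a configuration with $\sum_i a_i = Ct$ exactly, which is always possible with each $a_i \ge C$, whereas you sum over all totals $M \ge Ct$. (In your truncation remark the overshoot is bounded by $\ell/r$, not $\Delta(\cA)$, but your computation never uses that remark.)
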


\begin{proof}
We first observe the following bound on the upper tail of $|V(J) \cap T|$:
$$\Pr\big( |V(J) \cap T| \ge a \big) \le {\ell/r \choose a} \bigg( \frac{r |T|}{n} \bigg)^a \le r^{-a/2} = n^{-a/16},$$
since $|T| = O\pig( s^2 k / r \pig)$ and $k\ell = O\pig( s^2 n \pig)$. Next, observe that if~\eqref{eq:few:big:intersections} fails to hold, then for some $t$-set $T \subset V(A)$, there exists a set $\{ J_1, \ldots, J_d \} \subset \cJ_A$ and a sequence $\va = (a_1,\ldots,a_d)$, with $\sum_i a_i = Ct$ and $a_i \ge C$ for each $i \in [d]$, such that $|V(J_i) \cap T| \ge a_i$ for each $i \in [d]$. Since these events are independent, a union bound gives 
$${n/r \choose t} \sum_{d = 1}^t {m \choose d} {Ct \choose d} n^{-Ct/16} \le n^{-t},$$
as claimed.
\end{proof}

We can now deduce our bound on $X_A(T)$. 

\begin{proof}[Proof of Lemma~\ref{lem:A-open-edges}]
Given a $t$-set $T \subset V(A)$, define 
\begin{equation}\label{def:ai:D} 
\pig| V(J^A_i) \cap T \pig| = a_i \qquad \text{and} \qquad D = \big\{ i \in [m] : a_i \ge C \big\},
\end{equation}
where $\cJ_A = \big\{ J^A_1, \ldots, J^A_m \big\}$, and recall from~\eqref{eq:XAT:basic:bound} that 
\begin{equation}\label{eq:XAT:basic:bound:again} 
\pig| \hspace{0.015cm} X_A(T) \pig| \le \hspace{0.03cm} \sum_{i \eqa 1}^m {a_i \choose 2} + \sum_{ij \ina E(H)} a_i a_j.
\end{equation}
Now, by Lemmas~\ref{obs:deg} and~\ref{lem:few:big:intersections}, with very high probability we have
\begin{equation}\label{eq:few:big:intersections:app}
\sum_{i \ina D} a_i \le Ct \qquad \text{and} \qquad \sum_{i \eqa 1}^m a_i \le \Delta(\cA) \cdot t \le C s^2 t \log n.
\end{equation}
Moreover, by Lemmas~\ref{lem:intersection:sparse} and~\ref{lem:intersection:asym}, with very high probability we have 
\begin{equation}\label{eq:intersection:again}
e_H(Y,Z) \le n^{-c} \gap |Y| |Z| + C^2 s^2 \big( |Y| + |Z| \big) \log n
\end{equation}
for every $Y,Z \subset V(H)$ with $|Y| + |Z| = O\pig( s^2 t \log n \pig)$, and  
\begin{equation}\label{eq:intersection:asym:again}
e_H(Y,Z) \le C |Z|
\end{equation}
whenever $Y$ and $Z$ additionally satisfy $|Y| \le n^c$ and $|Z| \ge s^4 |Y| (\log n)^3$. 

We will show that if these four properties are satisfied by $\cJ_A$ and $T$, then the right-hand side of~\eqref{eq:XAT:basic:bound:again} is deterministically at most $(\beta/8) {t \choose 2}$. Note first that the first sum 
is easily bounded: since $|J| = \ell/r$ for every $J \in \cJ_A$, it follows from~\eqref{def:ai:D} and~\eqref{eq:few:big:intersections:app} that 
\begin{equation}\label{eq:XAT:bound1} 
\sum_{i \eqa 1}^m {a_i \choose 2} \le \gap \frac{\ell}{r} \gap \sum_{i \ina D} a_i + C \gap \sum_{i \eqa 1}^m a_i \le \frac{C \ell t}{r} + C^2 s^2 t \log n \le \frac{t^2}{n^c},
\end{equation}
since $t \ge k/2r$ and $k \ge 2^{12} \ell \log n \ge n^{1/4}$, by~\eqref{def:ell:m:r} and~\eqref{def:k}. To bound the second term, define
$$
D_0 = \big\{ i \in [m] : 1 \le a_i < C \big\} \qquad \text{and} \qquad D_x = \big\{ i \in D : 2^{x - 1} < a_i \le 2^x \big\}
$$
for each $x \in \N$, let $L = \pig\lceil \hspace{-0.02cm} \log_2(\ell/r) \hspace{0.01cm} \pig\rceil$, and observe that 
\begin{equation}\label{eq:final:thing:to:bound}
\sum_{ij \ina E(H)} a_i a_j \le C^2 \sum_{x \eqa 0}^L \sum_{y \eqa 0}^L 2^{x + y} e_H(D_x, D_y),
\end{equation}
since if $x > L$ then $D_x = \emptyset$. To bound the right-hand side of~\eqref{eq:final:thing:to:bound}, observe first that 
\begin{equation}\label{eq:product:bound}
\sum_{x \eqa 0}^L \sum_{y \eqa 0}^L 2^{x + y} |D_x| \gap |D_y| = \bigg( \sum_{x \eqa 0}^L 2^x |D_x| \bigg)^2 \le \bigg( 2 \sum_{i \eqa 1}^m a_i \bigg)^2 = O\big( s^4 t^2 (\log n)^2 \big),
\end{equation}
since $\sum_i a_i = O\pig( s^2 t \log n \pig)$, and that 
$$\sum_{x \eqa 1}^L \sum_{y \eqa 1}^L 2^{x + y} \big( |D_x| + |D_y| \big) \le \bigg( \sum_{x \eqa 1}^L 2^x |D_x| \bigg) \bigg( \sum_{y \eqa 1}^L 2^{y + 1} \bigg) \le \frac{C^2 t \ell}{r},$$
since $\sum_{x \gea 1} 2^x |D_x| \le 2 \sum_{i \ina D} a_i \le 2Ct$, by~\eqref{eq:few:big:intersections:app}, and $2^L \le 2\ell/r$. By~\eqref{eq:intersection:again}, it follows that
\begin{equation}\label{eq:bounding:xy:not:zero}
\sum_{x \eqa 1}^L \sum_{y \eqa 1}^L 2^{x + y} e_H(D_x, D_y) \le \frac{C^4 s^2 t \ell \log n}{r} + \frac{t^2}{n^{c/2}}.
\end{equation}
To deal with the terms involving $D_0$, observe first that\footnote{Here the set $D_0$ might be too small to apply~\eqref{eq:intersection:asym:again} directly, but since we only require the upper bound~\eqref{eq:intersection:asym:app}, in this case we can simply replace it by any superset of size at most $C s^2 t \log n$.}   
\begin{equation}\label{eq:intersection:asym:app}
e_H(D_0,D_x) \le C^2 s^2 t \log n
\end{equation}
if $|D_x| \le n^c$, by~\eqref{eq:few:big:intersections:app} and~\eqref{eq:intersection:asym:again}, since $|D_0| \le \sum_i a_i \le C s^2 t \log n$, and hence
$$\sum_{|D_x| \lea n^c} 2^{x} e_H(D_0, D_x) \le \sum_{x \eqa 1}^L 2^x C^2 s^2 t \log n \le \frac{C^3 s^2 t \ell \log n}{r},$$
since $2^L \le 2\ell/r$. For larger sets $D_x$, on the other hand, we claim that
$$
\sum_{|D_x| \gea n^c} 2^x \big( |D_0| + |D_x| \big) \le \frac{t \cdot |D_0|}{n^{c/3}} + \sum_{x \eqa 0}^L 2^x |D_x| \le \frac{t^2}{n^{c/4}},
$$
since $\sum_{x \gea 0} 2^x |D_x| \le 2 \sum_i a_i = O\pig( s^2 t \log n \pig)$, and hence $|D_x| \gea n^c$ implies that $2^x \le n^{-c/2} \cdot t$. Combining this with~\eqref{eq:intersection:again} and~\eqref{eq:product:bound}, it follows that
\begin{equation}\label{eq:bounding:y:zero}
\sum_{x \eqa 0}^L 2^{x} e_H(D_0, D_x) \le \frac{C^3 s^2 t \ell \log n}{r} + \frac{t^2}{n^{c/5}},
\end{equation}
and hence, putting the pieces together, by~\eqref{eq:XAT:basic:bound:again},~\eqref{eq:XAT:bound1},~\eqref{eq:final:thing:to:bound},~\eqref{eq:bounding:xy:not:zero} and~\eqref{eq:bounding:y:zero}, we obtain
\begin{equation}\label{eq:XAT:final:bound}
\pig| \hspace{0.015cm} X_A(T) \pig| \le \frac{C^6 s^2 t \ell \log n}{r} + \frac{t^2}{n^{c/6}} \le \frac{\beta}{8} {t \choose 2},
\end{equation}
as required, since $t \ge k/2r$ and $k = 2^{77} s^4 \ell \log n$. 
\end{proof}

\pagebreak

We are finally ready to prove Lemma~\ref{lem:main-open-edges}. To do so, we simply apply Lemma~\ref{lem:A-open-edges} to a random subset of the vertices of $A$, weighted by the size of the intersections $U_i \cap S$.

\begin{proof}[Proof of Lemma~\ref{lem:main-open-edges}]
Let $S \subset V(G)$ with $|S| = k$, and set $X_A = X_A(V(A))$. Observe that
$$\pig| \hspace{0.015cm} X_{A^*}(S) \pig| \le \hspace{0.02cm} rk + \hspace{0.02cm} \sum_{i,j} \pig| U_i \cap S \pig|  \cdot \pig| U_j \cap S \pig| \cdot \1\big[ u_i u_j \in X_A \big],$$
since at most $rk$ pairs of vertices in $S$ are contained in some part $U_i$, and if $x \in U_i \cap S$ and $y \in U_j \cap S$ with $i \ne j$, then $xy \in X_{A^*}(S)$ if and only if $u_i u_j \in X_A$. 

Now, define a random set $T \subset \pi_A(S)$ by including each $u_i \in \pi_A(S)$ independently with probability $|U_i \cap S| / r$. Note $\Ex\pig[  |T| \pig] = k/r$ and that 
$$\Ex\big[ | X_A(T) | \big] = \sum_{i, \gap j} \frac{\pig| U_i \cap S \pig|}{r} \cdot \frac{\pig| U_j \cap S \pig|}{r} \cdot \1\big[ u_i u_j \in X_A \big].$$
Since $k/2r \le |T| \le 2k/r$ with very high probability, by Chernoff, and $| X_A(T) | \le |T|^2$ deterministically, it follows from Lemma~\ref{lem:A-open-edges} that, with high probability,
$$\pig| \hspace{0.015cm} X_{A^*}(S) \pig| \le \hspace{0.02cm} rk + r^2 \cdot \Ex\big[ | X_A(T) | \big] \le \frac{\beta r^2}{4} {2k/r \choose 2} \le \beta \binom{k}{2}$$
for every $k$-set $S \subset V(G)$, as required. 
\end{proof}

\section{$K_s$-free graphs with no large $H$-free subsets}\label{sec:triangle-free}

To begin this final section, let us observe that the proof of Theorem~\ref{thm:ER} can easily be adapted to prove the following theorem, which immediately implies Theorem~\ref{thm:triangle-free}. 

\begin{theorem}\label{thm:triangle-free:quant}
Let $s \ge 3$. For every $K_s$-free graph $H$, there exists $C_H > 0$ such that the following holds for all sufficiently large $n \in \N$. There exists a $K_s$-free graph $G$ with $n$ vertices in which every set of at least $C_H \displaystyle{\sqrt{n \log n}}$ vertices contains a copy of $H$. 
\end{theorem}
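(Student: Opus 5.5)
We run the construction of Section~\ref{sec:construction} with the Turán graph $T_s(\ell/r)$ replaced by a suitable blow-up of $H$. Let $h = |V(H)|$ and let $J$ be the balanced blow-up $H(\ell/r)$ of $H$, in which each vertex of $H$ becomes an independent set of size $\ell/(hr)$ and each edge becomes a complete bipartite graph. We take $A$ and $B$ to be independent copies of $R(n/r,m,J)$, blow them up by $r$ and overlay them as before to obtain $G^*$. We then perform the same two deletion steps, obtaining $G$. The parameters $\ell$, $m$, $r$ and $k$ are chosen as in~\eqref{def:ell:m:r} and~\eqref{def:k}, with $s$ replaced by a large constant depending on $h$. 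We aim to show that $G$ is $K_s$-free and that, with high probability, every set of $4k$ vertices spans a copy of $H$.

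\emph{$K_s$-freeness.} Observation~\ref{obs:unique:Jstar} holds verbatim, so we repeat the proof of Lemma~\ref{lem:G-Ks+1-free} with $K_s$ in place of $K_{s+1}$. Suppose $G[S] = K_s$. If $S \subset V(J^*)$ for some $J^*$, then $G[S] \subset J^*[S]$. A clique in a blow-up of $H$ has at most one vertex in each part, so it maps injectively to a clique of $H$. Since $H$ is $K_s$-free, this is a contradiction. Otherwise $S$ contains a triangle not inside any single $J^*$, and one of its edges was removed in the second deletion step. The remark about the minority colour is unchanged.

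\emph{Finding $H$.} Observation~\ref{lem:partition-property} does not depend on $J$, so we may again assume $|I| = |\pi_A(I)| = k$. We replace Lemma~\ref{lem:S-hit-I} by the statement that each $J \in \cJ_A$ contains a copy of $H$ inside $T$ with probability at least $1 - h\,e^{-k\ell/(hn)}$, which is at least $1/2$ once the constant is large in terms of $h$. This holds because it suffices that each of the $h$ parts meets $T$: choosing one vertex of $T$ in each part gives a copy of $H$, since vertices in distinct parts are adjacent whenever the corresponding vertices of $H$ are. We now repeat the proof of Lemma~\ref{lem:ind-set-calc} with $S_i$ a uniformly chosen copy of $H$ in $J_i^*[I]$, taking one vertex per part. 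The footnote about $s=2$ is replaced by the fact that edges inside a single $J^*$ are closed. Claim~\ref{permutation:claim} goes through unchanged, because every edge of $S_i$ that is deleted was closed by an earlier $J^*$ or by a pair of them. The supersaturation estimate~\eqref{eq:supersaturation} becomes $\beta k^2 e(H)\binom{k}{2}^{-1} \le 2\beta h^2$, and it is at this point that we need $\beta \le c/h^2$. By symmetry, the image of $V(H)$ is then a uniform $h$-subset of $I$ equipped with a uniformly random labelling, so each edge of $H$ lands on a uniform pair.

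\emph{Closed edges.} Lemma~\ref{lem:main-open-edges} and its proof via Lemmas~\ref{lem:A-open-edges}--\ref{lem:few:big:intersections} use only the vertex sets $V(J)$, their size $\ell/r$, and the parameters. They therefore apply verbatim once $s$ is replaced by $\max(s,h)$ in the constants. The final union bound uses $m = 16 k \log n$, exactly as before.

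\emph{Main obstacle.} The step most likely to need care is checking that the symmetry argument in the proof of Lemma~\ref{lem:ind-set-calc} remains valid when $H$ is not vertex-transitive. Conditioning on the event that $J_i^*$ contains \emph{some} copy of $H$ in $I$ should be done part by part. We condition on each part meeting $T$, and then on which vertex of $T$ is selected in each part. Because parts are placed uniformly at random, the tuple of selected vertices is a uniform injective map $V(H) \to I$. If that identification fails, we would instead fix a bijection between the parts of $J$ and $V(H)$ in advance, and average over the random embedding of $J$.
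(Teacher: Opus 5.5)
Your route is the paper's: build the same construction with $J$ a balanced blow-up of $H$, get $K_s$-freeness because a clique inside one blow-up is a transversal of a clique of $H$, and rerun the proof of Lemma~\ref{lem:ind-set-calc} with transversal copies of $H$ in place of $K_s$. Your part-by-part treatment of the symmetry issue is fine. The step that fails as stated is your assertion that Claim~\ref{permutation:claim} goes through unchanged.

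In the first-deletion case, the deleted edge $e = xy$ of $J_i^*[S_i]$ is removed because $\{x,y\} \subset V(J_0^*)$ for some other $J_0 \in \cJ_A \cup \cJ_B$. But $xy$ need not be an edge of $J_0^*$, so your replacement for the footnote (edges inside a single $J^*$ are closed) does not apply. What the paper uses here is that \emph{every pair} of vertices of $V(J_0^*)$ is closed in the sense of \eqref{def:closed}, i.e.\ at distance at most $2$ in $J_0^*$. This holds for Tur\'an graphs. It fails for a blow-up of $H$ whenever $H$ has an isolated vertex, or two vertices at distance at least $3$. For instance, take $s=3$ and $H=P_4$ with vertices $a,b,c,d$ in order: a vertex of the $a$-part and one of the $d$-part of $J_0^*$ are at distance $3$. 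Then revealing $J_0$ before $J_i$ does not make $e$ closed at the moment $J_i$ is revealed, so the bound $\Ex_\pi[Y_\pi] \ge |Q_A(T)|/3$ is not justified.

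Any of the following repairs works.
\begin{itemize}
\item[(a)] Replace $H$ at the outset by a maximal $K_s$-free graph $H' \supseteq H$ on the same vertex set. Adding any non-edge creates a $K_s$ and $s \ge 3$, so non-adjacent vertices of $H'$ have a common neighbour, and (for $h \ge 2$) $H'$ has no isolated vertex. Hence every pair in a blow-up of $H'$ is at distance at most $2$, the paper's argument applies verbatim, and a copy of $H'$ contains $H$.
\item[(b)] Enlarge the closed set to all pairs $\{x,y\} \subset S$ with $x \in V(J_1^*)$ and $y \in V(J_2^*)$, for some $J_1,J_2$ of the same colour (possibly equal) with $V(J_1^*) \cap V(J_2^*) \ne \emptyset$. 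This set contains $X_{A^*}(S)$ and every pair inside a single $V(J^*)$. The right-hand side of \eqref{eq:XAT:basic:bound} already counts exactly such pairs, so Lemmas~\ref{lem:A-open-edges} and~\ref{lem:main-open-edges} hold for the enlarged set with unchanged proofs. This is precisely your remark that those lemmas only see vertex sets.
\item[(c)] Couple with the Tur\'an construction using $T_h(\ell/r)$ on the same embedded parts. Every edge of the $H$-version lies in the Tur\'an version, and first-step deletion depends only on the vertex sets. With consistent choices, the second step removes a subset of what it removes there. So each surviving transversal $K_h$ of the Tur\'an version carries a surviving copy of $H$.
\end{itemize}

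A smaller point: in your analogue of Lemma~\ref{lem:S-hit-I}, success probability $1/2$ per copy would not make the Chernoff step beat the $\binom{n}{k} \le e^{m/16}$ union bound. Your parameters, however, give failure probability at most $h e^{-8h}$, which does.
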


\begin{proof}
Define a graph $G$ as in Section~\ref{sec:construction}, but with $J$ equal to a balanced blow-up of $H$ with $\ell/r$ vertices. To show that $G$ is $K_s$-free, we repeat the proof of Lemma~\ref{lem:G-Ks+1-free}: if $G[S] = K_s$ then, since $J$ is $K_s$-free, there must exist three vertices $x,y,z \in S$ that are not all contained in $V(J^*)$ for any $J^* \in \cJ^*$. But then one of the edges of the triangle $xyz$ would have been removed in the second deletion step.

Now, the proof of Theorem~\ref{thm:ER} (applied with $s = v(H)$) implies that if $k \ge C_H \displaystyle{ \sqrt{n \log n} }$, then with high probability every set of $k$ vertices of $G$ contains one vertex from each part of some blow-up $J^* \in \cJ^*$ such that none of the edges between these vertices were removed in the deletion steps. These vertices therefore induce a copy of $H$ in $G$, as required. 
\end{proof}

Note that, since we proved Theorem~\ref{thm:ER:again} with a constant of order $s^3$, the argument above proves Theorem~\ref{thm:triangle-free:quant} with $C_H = O(v(H)^3)$. In the remainder of this final section we will discuss what we expect to be the correct dependence on $s$ for our construction.  

\subsection{The dependence of $f_s(n)$ on $s$}\label{discussion:sec}

With a little extra effort, the constant $C(s) = O(s^3)$ given by Theorem~\ref{thm:ER:again} can be improved to 
\begin{equation}\label{eq:optimal:bound}
C(s) = O\big( \hspace{0.02cm} s^{3/2} \sqrt{\log s} \hspace{0.04cm} \big).
\end{equation}
To see this, observe first that the inequalities that we needed $k$, $\ell$ and $m$ to satisfy during the proof were
\begin{equation}\label{eq:klm:needs}
\frac{k \ell}{n} \ge 8 s \log s, \qquad m \ge 16 k \log n \qquad \text{and} \qquad k \ge \frac{C s^2 m \ell^2}{n}
\end{equation}
for some constant $C > 0$, the first in the proof of Lemma~\ref{lem:S-hit-I}, the second in the final step of the proof of Theorem~\ref{thm:ER:again}, and the third in~\eqref{eq:XAT:final:bound}, the final step of the proof of Lemma~\ref{lem:A-open-edges}, where we have replaced a factor of $s^2 \log n$ by $m\ell / n$, which is the bound on $\Delta(\cA)$ given by Lemma~\ref{obs:deg}. These bounds together imply that 
$$k \ge C s^2 \log s \, \sqrt{n \log n},$$
so to obtain~\eqref{eq:optimal:bound} we need to work a little harder. 

To obtain the claimed bound, it suffices to show that the weaker inequalities 
\begin{equation}\label{eq:klm:better}
\frac{k \ell}{n} \ge C s \log s, \qquad m \log s \ge C k \log n \qquad \text{and} \qquad k \ge \frac{C s m \ell^2}{n}
\end{equation}
suffice for the proof. To do so, we need to deal with `unstructured' sets, for which we can assume that the projection is much larger than $k/r$, separately from `structured' sets, for which we have a much smaller union bound. 
The proof in the unstructured case is the same as above, but we can now take $\beta = s^{-3}$, say, which then allows us to obtain a bound of the form $s^{-\Theta(m)}$ in Lemma~\ref{lem:ind-set-calc}. In the structured case we can only prove Lemma~\ref{lem:A-open-edges} with $\beta = \Theta(s^{-1})$, due to the weaker bound on $k$, and therefore need to modify the proof of Lemma~\ref{lem:ind-set-calc}, using a standard supersaturation theorem for copies of $K_s$ in~\eqref{eq:supersaturation}. This gives a bound of the form $\exp\pig( - e^{-O(\beta s^2)} m \pig)$, which suffices in this case (as long as $n$ is sufficiently large as a function of $s$), since we only need to union bound over $s^{O(k)}$ sets. 


We believe that the bound~\eqref{eq:optimal:bound} is best possible for our construction, and therefore represents a natural barrier to further progress. To see this, observe that the first inequality in~\eqref{eq:klm:better} is needed so that most $k$-sets contain a copy of $K_s$ in a given copy of $T_s(\ell)$, 
the second is necessary for the union bound over $k$-sets, and the third is needed because the union of $s - 1$ neighbourhoods in the graph $A^*$ does not contain any open copy of $K_s$, and therefore any copy of $K_s$ in $G^*$ in this set will be destroyed in the deletion steps. 

Finally, let us note one further natural barrier, which does not depend on our particular choice of deletion steps. Since every neighbourhood in a $K_{s+1}$-free graph is $K_s$-free, we cannot hope to improve the third inequality in~\eqref{eq:klm:better} by more than a factor of $s$, and together with the first two inequalities this implies that 
$$k = \Theta\big( s \sqrt{\log s} \, \sqrt{n \log n} \big)$$
is a natural limit for any construction similar to ours.

\section*{Acknowledgements}

The work that led to this paper began during the PCMI summer program in July 2025, and continued during a visit of JS to IMPA in February 2026. The authors are grateful to both institutions for providing a wonderful working environment. They would also like to thank Huy Pham for pointing them to the matching lower bound in~\cite{JMRS21}.

\end{document}